\newtheorem{satz}{Satz}
\newtheorem{thm}[satz]{Theorem}
\newtheorem{lem}[satz]{Lemma}
\newtheorem{cor}[satz]{Corollary}
\newtheorem{pro}[satz]{Proposition}
\newtheorem{ex}[satz]{Example}
\newtheorem{defi}[satz]{Definition}
\newtheorem{rem}[satz]{Remark}
\renewcommand{\O}{\mathcal{O}}
\renewcommand{\P}{\mathds{P}}
\renewcommand{\tilde}{\widetilde}
\renewcommand{\H}{\textup{H}}
\newcommand*{\defeq}{\mathrel{\vcenter{\baselineskip0.5ex \lineskiplimit0pt
			\hbox{\scriptsize.}\hbox{\scriptsize.}}}%
	=}
\newcommand*{\eqdef}{=\mathrel{\vcenter{\baselineskip0.5ex \lineskiplimit0pt
			\hbox{\scriptsize.}\hbox{\scriptsize.}}}%
	}
\newcommand\restr[2]{
	{\left.\kern-\nulldelimiterspace#1\vphantom{\big|}\right|_{#2}}
}
\let\enumerateO\enumerate
\let\endenumerateO\endenumerate
\renewenvironment{enumerate}{\enumerateO\setlength{\parskip}{0em}\setlength{\parindent}{1em}}{\endenumerateO}
\newcommand{\mylabel}[2]{(#2)\def\@currentlabel{#2}\label{#1}}
\newcommand{\Z}{\ensuremath{\mathds{Z}}}
\newcommand{\Q}{\ensuremath{\mathds{Q}}}
\newcommand{\R}{\ensuremath{\mathds{R}}}
\newcommand{\C}{\ensuremath{\mathds{C}}}
\newcommand{\N}{\ensuremath{\mathds{N}}}
\newcommand{\Np}{\ensuremath{\mathds{N}_{>0}}}
\newcommand{\fH}{\ensuremath{\mathcal{H}}}
\newcommand{\fI}{\ensuremath{\mathcal{I}}}
\newcommand{\fN}{\ensuremath{\mathcal{N}}}
\newcommand{\fT}{\ensuremath{\mathcal{T}}}
\newcommand{\fm}{\ensuremath{\mathfrak{m}}}
\newcommand{\sing}{\ensuremath{\mathrm{sing}}}
\newcommand{\reg}{\ensuremath{\mathrm{reg}}}
\DeclareMathOperator{\codim}{codim}
\DeclareMathOperator{\ra}{\rightarrow}
\DeclareMathOperator{\inj}{\hookrightarrow}
\DeclareMathOperator{\lra}{\longrightarrow}
\DeclareMathOperator{\Ra}{\Rightarrow}
\DeclareMathOperator{\tr}{tr}
\DeclareMathOperator{\h}{h}
\DeclareMathOperator{\Alb}{Alb}
\DeclareMathOperator{\Aut}{Aut}
\title{Fujiki relations and fibrations of irreducible symplectic varieties}
\author{Martin Schwald}
\address{Martin Schwald, Universität Duisburg-Essen, Fakultät für Mathematik, Thea-Leymann-Str.\,9, 45127 Essen}
\email{\href{mailto:martin.schwald@uni-due.de}{martin.schwald@uni-due.de}}
\begin{document}



\maketitle

\begin{prelims}

\DisplayAbstractInEnglish

\bigskip

\DisplayKeyWords

\medskip

\DisplayMSCclass

\bigskip

\languagesection{Fran\c{c}ais}

\bigskip

\DisplayTitleInFrench

\medskip

\DisplayAbstractInFrench

\end{prelims}


\newpage

\setcounter{tocdepth}{1} 

\tableofcontents


\section{Introduction}
  
  \subsection{Symplectic varieties} \emph{Irreducible symplectic manifolds} were introduced by Beauville \cite[p.~763f]{Bea83} and play an important role in the structure theory of complex manifolds of Kodaira dimension zero as they occur as factors in the famous Beauville-Bogomolov decomposition theorem \cite[th.~2]{Bea83}.
A singular version of the decomposition theorem was recently proved \cite[Theorem~1.5]{HP19}, building on the work of \cite{GKP16, GGK17, DG18, Dru18}.
Remarkably, the class of singular symplectic factors emerging from this approach \cite[Definition~8.16.2]{GKP16}, we call them here \emph{primitive symplectic varieties}, differs from the types of singular symplectic varieties studied before.
  
 To understand the geometry of \emph{primitive symplectic varieties} better, we want to classify their fibrations.
 Our main results show that they behave much like irreducible symplectic manifolds and are arguably closer to them than other definitions. While we prove in Theorem~\ref{mainfujiki} and \ref{fibration} that many of the properties that are well known for irreducible symplectic manifolds hold for a bigger class of symplectic varieties, which we call here \emph{irreducible symplectic varieties}, Theorem~\ref{fibration2} holds for primitive symplectic varieties but not completely for \emph{irreducible symplectic varieties}.
  
 We briefly recall the relevant definitions required to state our main results. The different notions in the literature generalizing holomorphic symplectic manifolds to a singular setting agree in the existence of a \emph{symplectic form} on the smooth locus, which is best seen as a \emph{reflexive} $2$-form.
  
  \emph{Reflexive differential forms} are holomorphic $p$-forms on the smooth locus $X_{\reg}$ of a normal variety $X$. We denote the associated sheaf on $X$ by $\Omega^{[p]}_X$ and get
\[
  \Omega^{[p]}_X\cong i_*\Omega^p_{X_{\reg}}\cong(\Omega^p_X)^{\ast\ast}\quad\text{and}\quad\H^0(X,\,\Omega^{[p]}_X)\cong\H^0(X_{\reg},\,\Omega_{X_{\reg}}^p),
\]
  where $i\colon X_{\reg}\inj X$ is the inclusion. These forms satisfy good pull back properties, see Section~\ref{extensionsection}. We recommend \cite{MR597077} for a reference on \emph{reflexive sheaves} and \cite[I--III]{GKKP11}, \cite[I]{Kebekus2016} for more information on reflexive differential forms.

A \emph{quasi-\'{e}tale} morphism $\pi\colon X'\ra X$ is a finite surjective morphism between complex varieties that is \'{e}tale outside of an analytic subset $Z\subset X'$ with $\codim Z\geq2$.
\begin{defi}[Symplectic varieties]\label{symplectic}
  Let $X$ be a normal, complex projective variety.
	\begin{enumerate}
	\item A \emph{symplectic form} on $X$ is a reflexive form $\omega \in\H^0(X,\, \Omega^{[2]}_X)$ that is non-degenerate at every point $x\in X_{\reg}$.
	\item The pair $(X,\omega)$ is called a \emph{symplectic variety} if for every re\-so\-lution of singularities $\nu\colon\tilde{X}\ra X$ there is a holomorphic form ${\tilde{\omega}\in\H^0(\tilde{X},\,\Omega^2_{\tilde{X}})}$ that coincides with $\nu^*\omega$ over $X_{\reg}$. We say that $\nu^*\omega$ \emph{extends} to $\tilde{\omega}$ on $\tilde{X}$.
  \item A symplectic variety $(X,\omega)$ is called \emph{irreducible symplectic} if its irregularity $\h^1(X,\,\O_X)$ vanishes and if
  $\h^0(X,\,\Omega^{[2]}_X)=1$.
	\item We call an irreducible symplectic variety $(X,\omega)$ \emph{Namikawa symplectic} if $X$ is $\Q$-factorial with\\ $\codim_{\C}X_{\sing} \geq 4$.
	\item	We call a symplectic variety \emph{primitive symplectic} if for all quasi-\'{e}tale morphisms $\pi\colon X'\ra X$ with $X'$ normal the reflexive\footnote{This reflexive pullback exists e.g. due to \cite[Proposition~1.6]{MR597077}} pullback $\pi^*\omega\in\H^0(X',\,\Omega^{[2]}_{X'})$ generates the exterior algebra of reflexive forms on $X'$, so
	\[
	\bigoplus_{p\in\N}\H^0(X',\,\Omega^{[p]}_{X'})=\C\left[\pi^*\omega\right].
	\]
	\end{enumerate}
\end{defi}

Section~\ref{propertiessection} reviews properties of these notions and discusses their relations. Symplectic varieties were introduced by Beauville \cite[Definition~1.1]{Bea00}.

Namikawa symplectic varieties, most prominently studied by Namikawa and Matsushita, have a well-behaved deformation theory \cite{Nam01}.
Primitive symplectic varieties are building blocks in a singular version of the Beauville-Bogomolov decomposition theorem.
Using \cite[Proposition~6.9]{GKP16}, we can consider irreducible symplectic varieties as a natural generalization of these two notions. 

\emph{Irreducible symplectic manifolds} are by definition simply connected, smooth irreducible symplectic varieties \cite[p.~763f]{Bea83}. Using the Beauville-Bogomolov decomposition theorem we show that a smooth irreducible symplectic variety is always either simply connected or an \'{e}tale quotient of an Abelian variety by a finite group of biholomorphic automorphisms, see Lemma~\ref{simplyconnected}. It is unknown if the latter case really occurs.

Irreducible symplectic manifolds are always primitive symplectic by \cite[prop.~3]{Bea83}. In the singular case being primitive symplectic is known to be more restrictive, see Example~\ref{ex}. This is our motivation to study how their geometric properties differ.

\subsection{Main results} On an irreducible symplectic variety $X$, Namikawa and Kirschner constructed in \cite[Theorem~8~(2)]{Nam01}, \cite[Definition~3.2.7]{Kir15} an important quadratic form $q_X$ on $\H^2(X,\,\C)$, the \emph{Beauville-Bogomolov form}. Its definition will be recalled in Section~\ref{generalizedform}. We first prove the following minor generalization of results of Matsushita \cite[Proposition~4.1]{Mat15}, \cite[Theorem~1.2]{Mat01}.

\begin{thm}[Fujiki relations, index of the Beauville-Bogomolov form]
	\label{mainfujiki}
	Let $(X,\omega)$ be a $2n$-dimensional, irreducible symplectic variety. The Beauville-Bogomolov form $q_X$ has the following properties:
	\begin{enumerate}
		\item\label{fr} There is a number $c_X\in\R^+$, called \emph{Fujiki constant}, such that for all ${\alpha\in\H^2(X,\,\C)}$ the following, so-called \emph{Fujiki relation}, holds,\footnote{We will recall integration on singular cohomology in Section~\ref{integration}.}
		\[
			c_X\cdot q_{X}(\alpha)^n=\int_X\alpha^{2n}.
		\]
		In particular for classes of Cartier divisors $d=c_1\big(\O_X(D)\big)$, this relates the Beauville-Bogomolov form to the intersection product via $c_X\cdot q_{X}(d)^n=D^{2n}$.
		\item\label{ind} The restriction of $q_{X}$ to $\H^2(X,\,\R)\ra\R$ is a real quadratic form of \emph{index} $\big(3,0,b_2(X)-3\big)$.
	\end{enumerate}
\end{thm}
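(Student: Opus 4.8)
The plan is to reduce both statements to the case of \emph{Namikawa symplectic} varieties, where they are already due to Matsushita \cite{Mat01,Mat15}, by passing to a crepant birational model with controlled singularities. Concretely, I would run a relative minimal model program over $X$ to obtain a $\Q$-factorial terminalization $\pi\colon Y\ra X$: a projective birational morphism with $Y$ symplectic, $\Q$-factorial and terminal. Since terminal symplectic singularities have codimension at least $4$ (Namikawa), $Y$ is Namikawa symplectic in the sense of Definition~\ref{symplectic}. As $\pi$ is crepant and $X$ has rational singularities, the symplectic form pulls back to a symplectic reflexive form $\omega_Y=\pi^{[*]}\omega$ on $Y$ (Section~\ref{extensionsection}), and the relevant invariants are preserved: $\h^1(Y,\O_Y)=\h^1(X,\O_X)=0$ and $\h^0(Y,\Omega^{[2]}_Y)=\h^0(X,\Omega^{[2]}_X)=1$, so that $Y$ is again irreducible symplectic and $q_Y$ is defined.

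For the Fujiki relation~\ref{fr} I would transfer along $\pi^*\colon\H^2(X,\C)\ra\H^2(Y,\C)$. Because $\pi$ is birational one has $\pi_*[Y]=[X]$, so the projection formula for the integration of Section~\ref{integration} gives $\int_X\alpha^{2n}=\int_Y(\pi^*\alpha)^{2n}$ for all $\alpha\in\H^2(X,\C)$. Since $\omega_Y=\pi^{[*]}\omega$ and the normalizing integral $\int(\omega\bar\omega)^n$ is preserved, the defining formula of Section~\ref{generalizedform} yields $q_Y(\pi^*\alpha)=q_X(\alpha)$. Matsushita's Fujiki relation on $Y$ then gives $\int_X\alpha^{2n}=\int_Y(\pi^*\alpha)^{2n}=c_Y\,q_Y(\pi^*\alpha)^n=c_Y\,q_X(\alpha)^n$, so~\ref{fr} holds with $c_X\defeq c_Y$. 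For an ample class $h$ one has $q_X(h)=\tfrac n2\int_X h^2(\omega\bar\omega)^{n-1}>0$, the integrand being a positive top form on $X_{reg}$; together with $\int_X h^{2n}>0$ this forces $c_X=\int_X h^{2n}/q_X(h)^n\in\R^+$.

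For the index~\ref{ind} I would read off the signature of $q_X$ from that of $q_Y$, which is $\big(3,0,b_2(Y)-3\big)$ by Matsushita; in particular $q_Y$ is nondegenerate with positive index exactly $3$. As $X$ has rational singularities, $\pi^*$ is injective, so $\pi^*\H^2(X,\R)\subseteq\H^2(Y,\R)$ has dimension $b_2(X)$ and carries the form $q_X$. The three classes $\mathrm{Re}\,\omega_Y$, $\mathrm{Im}\,\omega_Y$ and $\pi^*h$ lie in this subspace, are mutually $q_Y$-orthogonal (different Hodge types), and satisfy $q_Y>0$ on each (the first two by the normalization $q_Y(\omega_Y,\bar\omega_Y)>0$, the third by the positivity just noted). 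They therefore span a maximal positive subspace $P$ of $q_Y$. Writing $\H^2(Y,\R)=P\perp P^{\perp}$ with $P^{\perp}$ negative definite, the inclusion $P\subseteq\pi^*\H^2(X,\R)$ forces the $q_Y$-orthogonal complement of $\pi^*\H^2(X,\R)$ to lie in $P^{\perp}$, hence to be negative definite. Thus $q_X=q_Y|_{\pi^*\H^2(X,\R)}$ is nondegenerate of index $\big(3,0,b_2(X)-3\big)$.

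The main obstacle is the existence and strict functoriality of the model $\pi\colon Y\ra X$. One must guarantee that a $\Q$-factorial terminalization exists in this generality as a projective morphism to $X$, that the reflexive pullback $\pi^{[*]}\omega$ is again nondegenerate, and—crucially—that the singular integration theory and the Beauville--Bogomolov form are compatible with $\pi^*$ \emph{on the nose}, i.e. that $\int_X\alpha^{2n}=\int_Y(\pi^*\alpha)^{2n}$ and $q_Y\circ\pi^*=q_X$ hold exactly rather than up to an undetermined factor. Once these compatibilities are secured, the remaining content is the linear-algebra transfer above together with the bookkeeping of Hodge numbers.
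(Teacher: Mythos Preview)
Your argument for part~\eqref{fr} is essentially identical to the paper's: pass to a $\Q$-factorial terminalization $\pi\colon Y\to X$, check that $(Y,\pi^{[*]}\omega)$ is Namikawa symplectic (the paper's Proposition~\ref{termismatsushita}), verify the compatibilities $\int_X\alpha^{2n}=\int_Y(\pi^*\alpha)^{2n}$ and $q_Y\circ\pi^*=q_X$ (the paper's Lemmas~\ref{pullbackintegrals} and~\ref{nqpullback}), and invoke Matsushita on $Y$. The technical inputs you flag as obstacles---existence of the terminalization, nondegeneracy of $\pi^{[*]}\omega$, exact compatibility of integration and of $q$ with $\pi^*$---are precisely what the paper sets up in Sections~\ref{terminalmodelsection}, \ref{integration} and~\ref{generalizedform}.

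Your argument for part~\eqref{ind} is correct but takes a genuinely different route. The paper does \emph{not} transfer Matsushita's index computation from $Y$; instead it works directly on $X$, producing in Proposition~\ref{decomposition} an explicit $q_X$-orthogonal splitting $\H^2(X,\R)=V_+\oplus V_-$ with $V_+=\langle w+\overline w,\,iw-i\overline w,\,a\rangle_\R$ and $V_-=a^\perp\cap\H^{1,1}(X)\cap\H^2(X,\R)$, and then uses the singular Hodge--Riemann bilinear relations (Corollary~\ref{bilrel}) together with the already-established Fujiki relation to show $q_X$ is positive resp.\ negative definite on $V_\pm$. Your approach is more economical: it reuses the reduction to $Y$ and a clean linear-algebra argument (the maximal positive subspace of $q_Y$ sits in $\pi^*\H^2(X,\R)$, hence the radical of the restriction lies in the negative-definite $P^\perp$ and must vanish). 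The paper's approach, on the other hand, avoids relying on Matsushita's index result (only his Fujiki relation), and---more importantly---the explicit description of $V_-$ is reused later: in the proof of Theorem~\ref{fibration}~\eqref{base} the negative-definiteness of $q_X$ on $a^\perp\cap\H^{1,1}(X)\cap\H^2(X,\R)$ is what forces $d=\lambda f^*h$.

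Two small points to tighten in your write-up: the orthogonality of $\mathrm{Re}\,\omega_Y$ and $\mathrm{Im}\,\omega_Y$ is not ``different Hodge types'' but a short direct computation; and $q_Y(\pi^*h)>0$ needs a word of justification since $\pi^*h$ is only big and nef on $Y$---the cleanest route is the integral formula $q_X(h)=n\int_{\tilde X}(\tilde\omega\overline{\tilde\omega})^{n-1}(\nu^*h)^2$ on a resolution, which is visibly positive.
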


Part~\eqref{ind} implies the existence of a $q_X$-orthogonal decomposition ${\H^2(X,\,\R)=V_+\oplus V_-}$ with $\dim_{\R}V_+=3$, such that $q_{X}$ is positive definite on $V_+$ and negative definite on $V_-$, see Proposition~\ref{decomposition}.

In the smooth case, these properties were proved by Fujiki and Beauville \cite[Theorem~4.7]{Fuj87}, \cite[th.~5(a)]{Bea83}. Matsushita generalized them to Namikawa symplectic varieties \cite[Theorem~1.2]{Mat01}, \cite[Proposition~4.1]{Mat15}.

Matsushita uses his versions of Theorem~\ref{mainfujiki} to show that fibrations of irreducible symplectic manifolds and Namikawa symplectic varieties have very special properties, \cite[Theorem~2]{Mat99}, \cite[Corollary~1]{Mat00}, \cite[Corollary~1.4]{Mat01}, \cite[Theorem~1.2,~1.10]{Mat15}. As our main results, the following Theorems~\ref{fibration} and \ref{fibration2}, we prove that all these results hold for primitive symplectic varieties and we show, which of them are also true for all irreducible symplectic varieties.

\begin{thm}[Fibrations of irreducible symplectic varieties]
\label{fibration}
  Let $(X,\omega)$ be an irreducible symplectic variety of complex
  dimension $2n$, together with a surjective morphism $f\colon X\ra B$ with connected fibers onto a
  normal, projective variety $B$ with $0<\dim B<2n$. Then the following properties hold.
\begin{enumerate}
\item\label{base} The base variety $B$ is an $n$-dimensional, $\Q$-factorial klt variety with Picard number $\rho(B)=1$.

\item\label{singlocus} The singular locus of $X$ is mapped to a proper closed subset of $B$. In particular, the general fiber is smooth and entirely contained in $X_{\reg}$.

\item\label{genfiber} The general fiber is an $n$-dimensional Abelian variety.
                                                                         
\item\label{equidim} Every fiber component of $f$ is an $n$-dimensional \emph{Lagrangian subvariety}\footnote{We discuss the definition of \emph{Lagrangian subvarieties} in Section~\ref{lagrangiansection}.} of $X$ and does not lie completely in $X_{\sing}$.
\end{enumerate}
\end{thm}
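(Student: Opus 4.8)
The plan is to read off all four assertions from the Fujiki relation and the index computation of Theorem~\ref{mainfujiki}, combined with the classical symplectic geometry of Lagrangian fibrations. Write $b:=\dim B$, fix an ample class $a\in\mathrm{NS}(B)_\R$ and set $d:=f^*a\in\H^{1,1}(X)\cap\H^2(X,\R)$, a nonzero nef class. First I would show that $d$ is $q_X$-isotropic and that $\rho(B)=1$. Since $B$ has complex dimension $b<2n$, the class $a^{2n}$ lies in $\H^{4n}(B)=0$, so the projection formula gives $\int_X d^{2n}=0$ and the Fujiki relation~\ref{mainfujiki}\,\eqref{fr} together with $c_X>0$ forces $q_X(d)=0$; the same computation for arbitrary classes in $\mathrm{NS}(B)_\R$ shows that the subspace $f^*\mathrm{NS}(B)_\R$ is totally $q_X$-isotropic. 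By Theorem~\ref{mainfujiki}\,\eqref{ind} and Proposition~\ref{decomposition} the positive-definite part of $q_X$ is three-dimensional and spanned by $\mathrm{Re}\,\omega$, $\mathrm{Im}\,\omega$ and a Kähler class; as the first two are of Hodge type $(2,0)+(0,2)$ and hence orthogonal to $\H^{1,1}(X)$, the restriction of $q_X$ to the real $(1,1)$-classes has index $(1,0,\ast)$. A totally isotropic subspace there is at most one-dimensional, so the injectivity of $f^*$ and $\rho(B)\ge 1$ give $\rho(B)=1$ and $f^*\mathrm{NS}(B)_\R=\R d$.

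Next I would prove $b=n$ by polarising the Fujiki relation: writing $q_X(\cdot,\cdot)$ for the associated bilinear form and fixing a Kähler class $\kappa$, the identity $\int_X\gamma^{2n}=c_X\,q_X(\gamma)^n$ expands into a positive combination of products of $n$ factors $q_X(\cdot,\cdot)$ over the perfect matchings of the arguments. Because $q_X(d,d)=0$, a monomial $\int_X d^{\,k}\kappa^{2n-k}$ vanishes as soon as $k>n$, while for $k\le n$ it equals a positive multiple of $q_X(d,\kappa)^k\,q_X(\kappa)^{\,n-k}$; here $q_X(d,\kappa)\neq 0$, for otherwise $d$ would lie in the negative-definite orthogonal complement of $\kappa$, contradicting $q_X(d)=0\neq d$. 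On the other hand the projection formula gives $\int_X d^{\,b}\kappa^{2n-b}=\big(\textstyle\int_F\kappa^{2n-b}\big)\int_B a^{b}>0$ for a general fibre $F$ of dimension $2n-b$, whence $b\le n$; and $\int_X d^{\,n}\kappa^{n}=\int_B a^{n}\,f_*(\kappa^{n})$ is a nonzero multiple of $q_X(d,\kappa)^n$, forcing $a^n\neq 0$ in $\H^{2n}(B)$ and hence $n\le b$. Thus $b=n$, proving the dimension statement of~\eqref{base} and showing that general fibres have dimension $n$.

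I would then treat the general fibre. By generic flatness and generic smoothness $f$ is flat with smooth $n$-dimensional fibre over a dense open subset of $B_{reg}$, and---granting the properness of $f(X_{sing})$ established below---a general fibre $F$ is a smooth $n$-fold contained in $X_{reg}$, which is~\eqref{singlocus}. On $F$ the classical Lagrangian-fibration argument applies: the antisymmetric homomorphism $f^*\Omega^1_B\to f^*T_B$ induced by $\omega^{-1}$ and $df$ descends, via $f_*\O_X=\O_B$, to a holomorphic Poisson bivector on $B$ which must vanish, so $\omega|_F=0$ and $F$ is Lagrangian. Equivalently, the Hamiltonian vector fields of $n$ local functions pulled back from $B$ are tangent to $F$, commute, and are everywhere independent, trivialising $T_F$; a smooth projective variety with trivial tangent bundle is an Abelian variety, giving~\eqref{genfiber} and the general-fibre case of~\eqref{equidim}. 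Since $\omega_X\cong\O_X$ and the general fibre is Calabi--Yau, the canonical bundle formula for this lc-trivial fibration writes $0\sim_\Q K_X\sim_\Q f^*(K_B+\Delta_B+M_B)$ with $(B,\Delta_B)$ klt, so $B$ is klt.

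The hard part will be the global assertions in~\eqref{equidim} and~\eqref{singlocus}: that \emph{every} fibre component has dimension exactly $n$ and is not contained in $X_{sing}$, and that $f(X_{sing})$ is a proper closed subset of $B$. For equidimensionality I would adapt Matsushita's argument for the nef isotropic class $d$, which satisfies $d^{\,n}\neq 0$ and $d^{\,n+1}=0$; the new feature is the singular locus, and here I would use that a symplectic variety carries a stratification into smooth symplectic leaves, each of even dimension at most $2n-2$, so that the restriction of $f$ to the closure of any leaf is again controlled by the induced symplectic form. This both bounds $\dim f(X_{sing})$---yielding $f(X_{sing})\subsetneq B$---and prevents a fibre component from lying entirely in $X_{sing}$, since such a component would violate the leafwise Lagrangian estimate. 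Finally $\Q$-factoriality of $B$ in~\eqref{base} I would deduce from $\rho(B)=1$ and the klt, equidimensional structure of $f$, as in Matsushita's treatment; this is the step I would be most careful about in the present, possibly non-$\Q$-factorial, setting for $X$.
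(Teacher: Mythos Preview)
Your computations of $q_X(f^*a)=0$, $\dim B=n$ and $\rho(B)=1$ are correct; the isotropic-subspace argument for $\rho(B)=1$ is in fact a bit slicker than the paper's explicit coefficient comparison. The serious problem is a circularity between part~\eqref{singlocus} and the Lagrangian property of the general fibre. Your Poisson/Hamiltonian argument for $\omega|_F=0$ requires $F\subset X_{reg}$; you obtain $F\subset X_{reg}$ from generic smoothness, which needs the source to be smooth and hence requires $f(X_{sing})\subsetneq B$ first; and your route to the latter is the symplectic stratification, restricting $f$ to a component $Z$ of $X_{sing}$ and using that $\overline f\colon\overline Z\to\overline{f(Z)}$ is again a Lagrangian fibration. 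But that last step (which is \cite[Theorem~3.1]{Mat15} in the paper) already presupposes that $f$ is a Lagrangian fibration. Without the Lagrangian property there is no lower bound on the fibre dimension of $f|_Z$, and since $\dim Z$ can be $2n-2$ one cannot conclude $\dim f(Z)<n$ from the stratification alone. (Separately, your Poisson-bivector sentence asserts that the induced bivector on $B$ ``must vanish'' without saying why; this is exactly the Lagrangian condition you are trying to prove.)

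The paper breaks the loop by reversing the order. It first proves $\omega|_F=0$ for the general fibre \emph{without} assuming $F\subset X_{reg}$: a Fujiki-relation identity gives $\int_F(\omega\overline\omega\,a^{n-2})|_F=0$, and the singular Hodge--Riemann relations on the klt variety $F$ (Corollary~\ref{bilrel}) then force $\omega|_F=0$. With $F$ Lagrangian but possibly singular, the symplectic form identifies $\Omega^{[1]}_F$ with the free normal sheaf $\fN_{F|X}$, and the Lipman--Zariski conjecture for klt singularities forces $F$ smooth; only then does $f(X_{sing})\subsetneq B$ follow. Your generic-smoothness shortcut is unavailable precisely because $X$ is singular. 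Finally, for $\Q$-factoriality of $B$ you correctly flag the difficulty but supply no mechanism; the paper's device is to pass to a $\Q$-factorial terminal model $Y\to X$ (Proposition~\ref{termismatsushita}), pull a Weil divisor on $B$ back to a $\Q$-Cartier class there, show it is a rational multiple of $f^*h$, and conclude that the original divisor is numerically $\Q$-Cartier, hence $\Q$-Cartier on the klt variety $B$ by \cite{HMP15}.
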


\begin{thm}[Fibrations of primitive symplectic varieties]
\label{fibration2}
  Let $f\colon X\ra B$ be as in Theorem~\ref{fibration}. If $X$ is a primitive symplectic variety, then the following holds.
\begin{enumerate}
\item\label{fano} The base variety $B$ is \emph{Fano}, so $-K_B$ is ample.
\item\label{bsmooth} If $B$ is smooth, then $B\cong\P^n$.
\end{enumerate}
\end{thm}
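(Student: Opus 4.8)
My plan is to reduce both statements to positivity of the Hodge bundle of the Abelian fibration and to the very definition of primitivity, so I first record two structural facts. Since $\omega^{n}$ trivialises $\O_X(K_X)$ by Definition~\ref{symplectic}, we have $K_X\sim0$ and hence $K_{X/B}=-f^{*}K_B$. By Theorem~\ref{fibration} the general fibre is a smooth Abelian variety lying in $X_{reg}$ and every fibre component is Lagrangian, so over the open set $U\subseteq B$ where $f$ is smooth the Lagrangian condition makes $T_{X/B}$ its own annihilator under $\omega$, and $\omega$ induces an isomorphism $T_{X/B}\cong f^{*}\Omega^{1}_B$, equivalently $\Omega^{1}_{X/B}\cong f^{*}T_B$. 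Taking reflexive determinants gives $\omega_{X/B}\cong f^{*}\O_B(-K_B)$, and the projection formula together with $f_{*}\O_X=\O_B$ yields the two identities $f_{*}\omega_{X/B}\cong\O_B(-K_B)$ and $f_{*}\Omega^{1}_{X/B}\cong T_B$ that drive the whole argument.

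For \eqref{fano} I will first show that $-K_B$ is nef. As nefness is tested on curves, I restrict the family to the normalisation of an arbitrary curve $C\subset B$ and invoke the Fujita--Kawamata semipositivity theorem for $f_{*}\omega_{X/B}$ over a smooth base curve, obtaining $-K_B\cdot C\ge0$ for every $C$. Because $\rho(B)=1$ by Theorem~\ref{fibration}\eqref{base}, the nef cone is a single ray and a nonzero nef class is automatically ample, so it remains only to exclude $-K_B\equiv0$. This is exactly the step that can fail for a merely irreducible symplectic $X$: an isotrivial Abelian fibration over a base with $K_B\equiv0$, for instance over an Abelian variety, is irreducible symplectic yet has non-Fano (and, being smooth, non-projective-space) base, and I will exhibit such an example to establish the negative half of the theorem.

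To rule out $-K_B\equiv0$ for primitive $X$, I pass to the period map $p$ of the weight-one variation $R^1f_{*}\C$, under which the ample Hodge line bundle on the moduli space of polarised Abelian varieties pulls back to $\det f_{*}\Omega^{1}_{X/B}=\O_B(-K_B)$. If $-K_B\equiv0$ this pullback is numerically trivial, forcing $p$ to contract every curve, so the variation is isotrivial with finite monodromy. Trivialising the monodromy produces a finite cover on which the family becomes a product with a positive-dimensional Abelian variety $A$, and pulling back $\H^0(A,\Omega^{1}_A)\ne0$ yields nonzero reflexive $1$-forms. On a primitive symplectic variety, however, Definition~\ref{symplectic} forces $\bigoplus_p\H^0(X',\Omega^{[p]}_{X'})=\C[\,\pi^{*}\omega\,]$ for every quasi-\'{e}tale cover $\pi\colon X'\to X$, a ring concentrated in even degrees, whence $\H^0(X',\Omega^{[1]}_{X'})=0$; this contradiction shows $-K_B\not\equiv0$, so $-K_B$ is ample and $B$ is Fano. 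Making the passage from $-K_B\equiv0$ to an \emph{honest} quasi-\'{e}tale cover of $X$ rigorous is delicate, since the monodromy cover may ramify along the discriminant of $f$; one controls this either by analysing the degenerate fibres or by routing the argument through the vanishing of the augmented irregularity of primitive symplectic varieties.

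For \eqref{bsmooth} assume in addition that $B$ is smooth; by \eqref{fano} it is then a smooth Fano manifold of dimension $n$ with $\rho(B)=1$. I choose a minimal rational curve $C\subset B$ and study the restriction of $R^1f_{*}\C$ to its normalisation $\P^1$: the isomorphism $\Omega^{1}_{X/B}\cong f^{*}T_B$ identifies the Kodaira--Spencer field of this restricted variation with an isomorphism, so the nef bundle $T_B|_C$ admits no trivial summand; since it contains $T_C=\O(2)$ and the curve is minimal, its splitting type must be $\O(2)\oplus\O(1)^{\oplus(n-1)}$, giving $-K_B\cdot C=n+1$, and the Cho--Miyaoka--Shepherd-Barron characterisation of projective space (equivalently Kobayashi--Ochiai for Fano index $n+1$) then yields $B\cong\P^{n}$. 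The main obstacle is precisely this equality: it is the analogue, for a singular total space, of Hwang's theorem that the smooth base of a Lagrangian fibration of an irreducible symplectic manifold is $\P^{n}$, and upgrading the Kodaira--Spencer isomorphism from mere nefness to genuine strict positivity of every summand of $T_B|_C$ is the hard point that here replaces Hwang's variety-of-minimal-rational-tangents analysis.
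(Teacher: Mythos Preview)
Your proposal has genuine gaps in all three pieces, two of which you flag yourself but do not close, and the paper's routes are in each case simpler.

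For the counterexample, an isotrivial Abelian fibration over an Abelian variety does \emph{not} give an irreducible symplectic $X$: if $B$ is Abelian then $h^{1}(B,\O_B)>0$, and the Leray spectral sequence forces $h^{1}(X,\O_X)>0$, violating Definition~\ref{symplectic}. The paper's Example~\ref{ex} is more delicate: a quotient of $E^{6}$ by a finite group chosen so that the base is a smooth Calabi--Yau threefold (hence $h^{1}=0$, $K_B=0$, not Fano, not $\P^{3}$) while the total space keeps exactly one reflexive $2$-form and no $1$-forms.

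For part~\eqref{fano}, your exclusion of $K_B\equiv0$ via isotriviality of the period map leaves you, as you acknowledge, unable to produce a genuine quasi-\'etale cover of $X$ because the monodromy cover may ramify along the discriminant. The paper sidesteps this completely: once $K_B\equiv0$, abundance in the numerically trivial case gives $dK_B\sim0$, and the index-one cover $p\colon\hat B\to B$ is quasi-\'etale because it can ramify only over $B_{sing}$. Base-changing $f$ along $p$ and normalising yields a quasi-\'etale $\pi\colon\hat X\to X$ together with $\hat f\colon\hat X\to\hat B$; the nonzero section of $\omega_{\hat B}$ pulls back to a reflexive $n$-form $\hat f^{*}\alpha$ on $\hat X$ with $(\hat f^{*}\alpha)^{2}=0$, which cannot lie in $\C[\pi^{*}\omega]$ since $(\pi^{*}\omega)^{n}$ is nowhere zero. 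No period map and no $1$-forms are needed. Similarly, the paper obtains $-K_B$ nef from the Iitaka inequality $\kappa(B)\le\kappa(X)-\kappa(F)=0$ rather than from semipositivity of $f_{*}\omega_{X/B}$. For part~\eqref{bsmooth}, your minimal-rational-curve argument rests on $f_{*}\Omega^{1}_{X/B}\cong T_B$, which you have only over the locus where $f$ is smooth, and your minimal curve need not avoid the discriminant; obtaining the correct splitting type there is exactly the substance of Hwang's theorem that you have not reproduced. The paper instead passes to a terminal model $Y\to X$, which is again primitive symplectic by Proposition~\ref{termisprimitive} and hence cohomologically irreducible symplectic, and then invokes Matsushita's result \cite[Theorem~1.10]{Mat15} directly.
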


Both parts of Theorem~\ref{fibration2} were known to hold for all irreducible symplectic manifolds \cite[Theorem~2~(3)]{Mat01}, \cite[Theorem~1.2]{Hwa08}, \cite[Theorem~1.1]{GL14}. It is remarkable that if $X$ is an irreducible symplectic variety, then $K_B$ can vanish, hence part~\ref{fano} can fail for irreducible symplectic varieties, as Example~\ref{ex} shows.

To prove the results on fibrations we work as much as possible on the singular variety $X$. We hope that this will also allow to tackle further questions on Lagrangian fibrations. Open problems are for example a classification of the singular fibers and of the possible base varieties $B$. We still know little about its possible singularities. When $X$ is smooth, it is conjectured that $B$ is also smooth, but so far this got only proved for $\dim X\leq 4$ by Huybrechts--Xu and Ou \cite{HX19}, \cite[Theorem~1.2]{Ou16}. In general $B$ can certainly be singular, even if $X$ is primitive symplectic, \cite[Theorem~1.9]{Mat15}.

\subsection{Outline of the paper}

We present in \ref{extensionsection}--\ref{hodgetheory} our most important methods to prove our main theorems. They are the general extension theorem for differential forms from \cite[Theorem~1.4]{GKKP11}, the existence of terminal models from \cite[Corollary~1.4.3]{BCHM10} and results on the Hodge theory of klt varieties that we explain in detail in \cite{Sch17b}. In particular we will need the Hodge decomposition of $\H^2(X,\,\C)$ and a singular version of the Hodge-Riemann bilinear relations.

After recapitulating well-known properties of symplectic varieties in \ref{propertiessection}, we prove that terminal models of irreducible or primitive symplectic varieties are Namikawa or primitive symplectic, respectively. In \ref{lagrangiansection} we prove in Theorem~\ref{lag} that singular fibers of Lagrangian fibrations are automatically also Lagrangian, implying that Lagrangian fibrations are always equidimensional. This requires a careful revision of the definitions of Lagrangian subvarieties and a subtle inductive argument. Theorem~\ref{lag} will be used to show that the general fiber of a Lagrangian fibration is an Abelian variety, which in fact holds for fibrations of every symplectic variety. Here we make use of a recently proven special case of the Lipman-Zariski conjecture, which makes the proof even easier and more geometric than Matsushita's proof of \cite[Corollary~1.4 (2)]{Mat01}. We explain in Section~\ref{generalizedform} how the Beauville-Bogomolov form $q_X$ is defined in the singular setting in a way that is most suitable for direct computations in $\H^2(X,\,\C)$.

In Section~\ref{proofs} we finally prove our main results. We deduce the Fujiki relation directly from a terminal model, and use our Hodge theoretic methods to calculate the index of $q_X$ by hand similarly like Matsushita. Theorem~\ref{mainfujiki} is essential to prove the rest of Theorem~\ref{fibration} in the spirit of Matsushita's proof of \cite[Corollary~1.4]{Mat01}. Proving that $B$ is $\Q$-factorial becomes particularly involved because handling Weil divisors is much harder in the singular case.  The proof of Theorem~\ref{fibration2} uses the branched covering trick and going over to a terminal model, which we have seen to be primitive symplectic again.

\subsection{Conventions}
We use the terminology of \cite{Har77}, in particular varieties are defined to be irreducible. We denote the smooth and the singular locus of a variety $X$ by $X_{\reg}$ and $X_{\sing}$, respectively. For the terminology of the Minimal Model Program we refer to \cite{KM98}, but we use the sloppy term \emph{klt variety} for a variety $X$ that carries an \emph{effective} Weil divisor $D$, such that the pair $(X,D)$ is klt according to \cite[Definition~2.34]{KM98}.

\subsection{Acknowledgment}

The results of this paper are an improved version of results of my PhD thesis, answering also Questions 6, 8 and Conjecture 7 that were stated in the introduction of the thesis, \cite{Sch17b}. I want to thank my supervisor S.\,Kebekus, as well as K.\,Oguiso, D.\,Greb, P.\,Graf, T.\,Kirschner and B.\,Taji for their advice and fruitful discussions. Special thanks to the anonymous referee for several corrections and improvements.

\section{Methods}

We recall here the extension theorem, terminal models, integration over singular cohomology classes and results on the Hodge theory for klt varieties. We recapitulate well-known properties of symplectic varieties that will
imply that terminal models of irreducible or primitive symplectic varieties are Namikawa or primitive
symplectic, respectively. Then we discuss the notions of Lagrangian subvarieties and fibrations and examine their singular fibers.

 \subsection{Extension Theorem}
\label{extensionsection}
The extension theorem for differential forms \cite[Theorem~1.4]{GKKP11} shows that on a klt variety $X$ the pullback of every reflexive form ${\alpha\in\H^0(X,\,\Omega^{[p]}_X)}$ to a resolution of singularities can be \emph{extended} over the exceptional locus. This allows us to construct pullbacks of reflexive forms along more general morphisms. 

\begin{thm}[Pullbacks of reflexive forms, \protect{\cite[Theorem~1.3]{MR3084424}}]\label{extension}
  Let $f\colon Y\ra X$ be a morphism between normal, complex quasi-projective varieties where $X$ is a klt variety. Then for every $p$ there is a natural pullback morphism $f^*\Omega^{[p]}_X\ra\Omega^{[p]}_Y$, consistent with the natural pullback of Kähler differentials on $X_{\reg}$.\qed
\end{thm}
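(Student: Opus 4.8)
The plan is to build the pullback morphism out of the extension theorem for reflexive forms on klt varieties, \cite[Theorem~1.4]{GKKP11}, which for any resolution $\pi\colon\tilde X\ra X$ identifies $\pi_*\Omega^p_{\tilde X}\cong\Omega^{[p]}_X$. Since giving a morphism into $\Omega^{[p]}_Y\cong i_*\Omega^p_{Y_{reg}}$ is, by adjunction for the inclusion $i\colon Y_{reg}\inj Y$, the same as giving a morphism $\bigl(f^*\Omega^{[p]}_X\bigr)|_{Y_{reg}}\ra\Omega^p_{Y_{reg}}$, I would first restrict to $Y_{reg}$ and thereby reduce to the case where $Y$ is smooth. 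The task then becomes the construction of a morphism $f^*\Omega^{[p]}_X\ra\Omega^p_Y$ on a smooth variety $Y$, to be made consistent with the ordinary exterior pullback of Kähler differentials over the open locus $f^{-1}(X_{reg})$.

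First I would fix a resolution $\pi\colon\tilde X\ra X$; the comparison isomorphism $\pi_*\Omega^p_{\tilde X}\cong\Omega^{[p]}_X$ furnishes, via adjunction, a reflexive pullback morphism $\pi^*\Omega^{[p]}_X\ra\Omega^p_{\tilde X}$. Resolving the indeterminacy of the rational map $\pi^{-1}\circ f\colon Y\dashrightarrow\tilde X$ over the smooth variety $Y$ produces a proper birational morphism $\nu\colon\tilde Y\ra Y$ from a smooth variety, together with a lift $\tilde f\colon\tilde Y\ra\tilde X$ satisfying $\pi\circ\tilde f=f\circ\nu$. Pulling the reflexive pullback morphism back along $\nu$ and composing with the ordinary Kähler pullback along $\tilde f$ gives $\nu^*f^*\Omega^{[p]}_X\cong\tilde f^*\pi^*\Omega^{[p]}_X\ra\tilde f^*\Omega^p_{\tilde X}\ra\Omega^p_{\tilde Y}$. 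Applying $\nu_*$, using the unit $f^*\Omega^{[p]}_X\ra\nu_*\nu^*f^*\Omega^{[p]}_X$ together with the standard identity $\nu_*\Omega^p_{\tilde Y}\cong\Omega^p_Y$ for a birational morphism onto a smooth variety, I obtain the desired sheaf morphism $f^*\Omega^{[p]}_X\ra\Omega^p_Y$, not merely a map on global sections.

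I expect the main obstacle to be showing that this morphism is independent of the chosen data — the resolution $\pi$, the modification $\nu$, and the lift $\tilde f$ — so that the local constructions glue to a single canonical, natural morphism. The strategy is to compare any two choices through a common dominating resolution: the extension theorem guarantees that the two reflexive pullbacks of a local section $\alpha$ agree after pullback to a resolution dominating both models of $\tilde X$, while functoriality of the ordinary exterior pullback on the smooth refinement forces the corresponding forms on $\tilde Y$ to coincide over the dense open locus where $\nu$ is an isomorphism. The genuinely delicate point is the behavior over the locus where $f$ maps $Y$ into $X_{sing}$, where the naive Kähler pullback is undefined; controlling the extended forms precisely there is exactly the content the extension theorem supplies. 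Since the resulting $p$-forms on $Y$ are regular and agree on a dense open subset of the smooth variety $Y$, they agree everywhere, which yields well-definedness.

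Consistency with the pullback of Kähler differentials over $X_{reg}$ is then immediate from the construction: over $f^{-1}(X_{reg})$ both $\pi$ and $\nu$ are isomorphisms, so the composite reduces to the classical $df$. Finally, naturality of the construction follows from the same common-refinement comparison applied to the relevant commutative diagrams, once the morphism has been shown to be canonical.
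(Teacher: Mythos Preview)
The paper does not prove this theorem at all: the statement is quoted from \cite[Theorem~1.3]{MR3084424} and closed with \qed, so there is no argument in the paper to compare your proposal against. Your sketch is in fact the strategy of the cited reference (Kebekus, \emph{Pull-back morphisms for reflexive differential forms}): reduce to smooth $Y$ via $\Omega^{[p]}_Y\cong i_*\Omega^p_{Y_{reg}}$, resolve $X$ and invoke the extension theorem \cite[Theorem~1.4]{GKKP11} to identify $\pi_*\Omega^p_{\tilde X}\cong\Omega^{[p]}_X$, resolve the induced rational map $Y\dashrightarrow\tilde X$, and push the ordinary Kähler pullback back down to $Y$; independence of choices is then checked via common refinements.

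One small imprecision worth fixing: you write that ``over $f^{-1}(X_{reg})$ both $\pi$ and $\nu$ are isomorphisms''. The map $\pi$ is over $X$, not $Y$; what you mean is that $\pi$ is an isomorphism over $X_{reg}$, and since the indeterminacy locus of $\pi^{-1}\circ f$ is contained in $f^{-1}(X_{sing})$, the modification $\nu$ may be chosen to be an isomorphism over $f^{-1}(X_{reg})$. With that adjustment your consistency claim goes through.
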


The consistency with the natural pullback of Kähler differentials is made precise in \cite[Theorem~5.2]{MR3084424}. We will only consider the case when $f\colon Y\ra X$ is surjective. Then this consistency means that the pullback morphism $f^*\Omega^{[p]}_X\ra\Omega^{[p]}_Y$ agrees with the usual pullback of Kähler differentials wherever $X$ and $Y$ are smooth. In other words, for every $\alpha\in\H^0(X,\,\Omega^{[p]}_X)$ there is an $\tilde{\alpha}\in\H^0(Y,\,\Omega^{[p]}_Y)$ that restricts on $Y_{\reg}\cap f^{-1}(X_{\reg})$ to the natural pullback $f^*(\restr{\alpha}{X_{\reg}})$ of Kähler differentials. We call $\tilde{\alpha}$ an \emph{extension} of $f^*{\alpha}$ to $Y$. Pulling back reflexive forms is contravariant functorial with respect to $f$. 

\subsection{Terminal models}
\label{terminalmodelsection}
While not every singular variety admits a crepant resolution, complex projective klt varieties have always a \emph{terminal model}. For canonical singularities a terminal model is a \emph{crepant partial resolution}. This is an easy special case of \protect{\cite[Corollary~1.4.3]{BCHM10}}.

\begin{thm}[Terminal models, \protect{\cite[page~413]{BCHM10}}]
\label{term}
Let $X$ be a complex projective variety with canonical singularities. Then every resolution $\nu\colon\tilde{X}\ra X$ of $X$ factors as ${\tilde{X}\stackrel{\tilde{\pi}}{\lra}Y\stackrel{\pi}{\lra}X}$, where $\tilde{\pi},\pi$ are birational and $Y$ is a $\Q$-factorial variety with at most terminal singularities, such that $\pi^*K_X=K_Y$. We call $Y$ a \emph{terminal model} of $X$. 
\qed
\end{thm}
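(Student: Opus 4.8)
The plan is to obtain the statement from \cite[Corollary~1.4.3]{BCHM10} by running a relative minimal model program and then exploiting canonicity to make the resulting relative minimal model \emph{crepant}. First I would record why the machinery applies: since $X$ has canonical singularities, $K_X$ is $\Q$-Cartier and $(X,0)$ is a klt pair. Fixing the resolution $\nu\colon\tilde X\ra X$, write
\[
K_{\tilde X}=\nu^*K_X+\sum_i a_iE_i,\qquad a_i\geq0,
\]
where the $E_i$ are the $\nu$-exceptional prime divisors; the inequalities $a_i\geq0$ are precisely the definition of canonical singularities. In particular the discrepancy divisor $\sum_i a_iE_i$ is effective, $\nu$-exceptional, and numerically equivalent to $K_{\tilde X}$ over $X$.

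Next I would run the relative $K_{\tilde X}$-minimal model program over $X$ starting from $\tilde X$; its termination with a minimal model (rather than a Mori fibre space, which is excluded by the relative pseudo-effectivity just noted) is the content of \cite[Corollary~1.4.3]{BCHM10}. Since we begin with the smooth --- hence terminal --- variety $\tilde X$ and take the empty boundary, every divisorial contraction and flip preserves $\Q$-factoriality and terminal singularities. Thus the output $\pi\colon Y\ra X$ is $\Q$-factorial with at most terminal singularities, $K_Y$ is $\pi$-nef, and the program realises a birational contraction $\tilde\pi\colon\tilde X\dashrightarrow Y$ over $X$ with $\nu=\pi\circ\tilde\pi$.

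The decisive step, where canonicity is genuinely used, is the identity $\pi^*K_X=K_Y$. Put $B\defeq K_Y-\pi^*K_X$. Its coefficients are the discrepancies of the $\pi$-exceptional primes, so $B\geq0$ because $X$ is canonical; moreover $\pi_*B=0$, so $B$ is $\pi$-exceptional, and $B$ is $\pi$-nef because $K_Y$ is $\pi$-nef and $\pi^*K_X\equiv_\pi0$. Applying the negativity lemma \cite[Lemma~3.39]{KM98} to $-B$, whose pushforward $\pi_*(-B)=0$ is effective, yields $-B\geq0$, whence $B=0$ and $K_Y=\pi^*K_X$. I expect this crepancy step to be the main obstacle, since it --- not the mere existence of a relative minimal model --- is what singles out a terminal model of a canonical variety; the only remaining subtlety is to verify that the a priori rational map $\tilde\pi$ is in fact a morphism, which one arranges by choosing $\tilde X$ to dominate $Y$ (replacing it by a common resolution of $\tilde X$ and $Y$ if necessary), so that the given resolution factors through $Y$ as claimed.
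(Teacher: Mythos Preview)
The paper supplies no proof of its own here: the statement carries a \qed{} and is presented as a direct citation of \cite[Corollary~1.4.3]{BCHM10}. Your argument---run the relative $K_{\tilde X}$-MMP over $X$ starting from the smooth $\tilde X$, land on a $\Q$-factorial terminal $Y$ with $K_Y$ $\pi$-nef, then force $K_Y=\pi^*K_X$ via the negativity lemma---is precisely the standard way to extract this from BCHM, and those steps are correct.

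There is, however, a genuine gap in your final sentence. You rightly flag that $\tilde\pi\colon\tilde X\dashrightarrow Y$ is a priori only a rational map, since the MMP may involve flips in dimension $\geq 4$. Your proposed remedy, ``replacing $\tilde X$ by a common resolution of $\tilde X$ and $Y$'', alters the resolution: after that replacement you know only that the \emph{new} resolution factors through $Y$, not that the originally prescribed $\nu\colon\tilde X\to X$ does. So the literal factorization claim for an arbitrary given $\tilde X$ remains unproved by your argument, and closing it is not a triviality. That said, inspecting how the paper actually uses Theorem~\ref{term}---in Propositions~\ref{termismatsushita} and~\ref{termisprimitive}, and in the proof of Theorem~\ref{mainfujiki}---shows it only ever invokes the \emph{existence} of a crepant $\Q$-factorial terminal $\pi\colon Y\to X$ and then takes resolutions of $Y$; the factorization of a pre-specified resolution through $Y$ is never needed. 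What you have correctly established is therefore all that is used downstream.
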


\subsection{Integration of singular top classes}
\label{integration}

Recall that we integrate a top singular cohomology class $\phi\in\H^{2n}(X,\,\C)$ over an $n$-dimensional, compact, complex variety $X$ as $\int_X\phi\defeq[X]\cap\phi$. Here $[X]\in\H_{2n}(X,\,\Z)$ denotes the canonical fundamental class of $X$, induced by the complex structure on its smooth locus $X_{\reg}$, \cite[Section~2.5]{Sch17b}. This integration commutes with pullbacks by bimeromorphic maps. 

\begin{lem}[\protect{Pullbacks of integrals, \cite[Lemma~22]{Sch17b}}]
	\label{pullbackintegrals}
	If $f\colon X\ra Y$ is a bimeromorphic morphism of compact, complex varieties, then $\int_Y\alpha=\int_Xf^*\alpha$ for all
	$\alpha\in\H^{2n}(Y,\,\C)$.\qed
\end{lem}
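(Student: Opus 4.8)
The plan is to write both integrals as cap products with fundamental classes and to reduce the statement to the single identity $f_*[X]=[Y]$ in $\H_{2n}(Y,\,\Z)$, where $f_*$ denotes the map induced on singular homology by the proper morphism $f$. Granting this identity, everything else is the projection formula for the cap product together with the fact that $f$ respects the augmentation in degree zero.

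First I would use that $f$, being a morphism of compact varieties, is proper, so that $f_*$ is defined and the projection formula
\[
f_*\big(\sigma\cap f^*\alpha\big)=f_*(\sigma)\cap\alpha
\]
holds for all $\sigma\in\H_*(X,\,\C)$ and $\alpha\in\H^*(Y,\,\C)$. Applied to $\sigma=[X]$ and the top class $\alpha\in\H^{2n}(Y,\,\C)$, it yields the identity $f_*\big([X]\cap f^*\alpha\big)=f_*[X]\cap\alpha$ in $\H_0(Y,\,\C)$. Since $X$ and $Y$ are irreducible, hence connected, the augmentation isomorphisms $\H_0(X,\,\C)\cong\C$ and $\H_0(Y,\,\C)\cong\C$ turn $[X]\cap f^*\alpha$ into the scalar $\int_X f^*\alpha$ and $[Y]\cap\alpha$ into $\int_Y\alpha$. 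As $f$ maps points to points, $f_*$ commutes with these augmentations and hence preserves the represented scalar. Combining this with the identity above, once $f_*[X]=[Y]$ is established we obtain $\int_X f^*\alpha=\int_Y\alpha$.

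It remains to prove $f_*[X]=[Y]$, which I expect to be the main obstacle: this is the only step that uses that $f$ is bimeromorphic rather than merely proper and surjective, where for a general surjection of degree $d$ one would only get $f_*[X]=d\cdot[Y]$. Being bimeromorphic, $f$ restricts to a biholomorphism $X\setminus A\xrightarrow{\,\sim\,}Y\setminus B$ for closed analytic subsets which, after enlarging, we may assume contain $X_{sing}$ and $Y_{sing}$ and satisfy $\codim_{\C}B\geq1$, so that $B$ has real codimension at least two in $Y$. By the pseudomanifold construction of \cite[1.1]{GM80} the class $[Y]$ is the unique generator of $\H_{2n}(Y,\,\Z)\cong\Z$ determined by the complex orientation on $Y_{reg}$; concretely, the restriction $\H_{2n}(Y,\,\Z)\ra\H_{2n}^{\mathrm{BM}}(Y\setminus B,\,\Z)$ to the open oriented $2n$-manifold $Y\setminus B$ is an isomorphism, because removing the real-codimension-two set $B$ does not change top homology, and it sends $[Y]$ to the Borel--Moore fundamental class. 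Since $f$ is an orientation-preserving biholomorphism over $Y\setminus B$, it carries the corresponding class on $X\setminus A$ to that on $Y\setminus B$, whence $f_*[X]$ and $[Y]$ have the same image under this injective restriction and therefore agree. I would make this last compatibility precise using the excision and long-exact-sequence arguments already recorded in \cite[Section~3.4]{Sch16}, which establish that $\H_{2n}$ is generated by the fundamental class and is detected on the smooth locus.
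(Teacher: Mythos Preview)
The paper does not give its own proof of this lemma: the statement ends with an immediate \qed{} and is cited from \cite[Lemma~12]{Sch16}. Your argument is correct and is precisely the standard one expected there---reduce via the projection formula to $f_*[X]=[Y]$, and verify the latter by detecting both classes on the complement of a real-codimension-two analytic subset where $f$ is a biholomorphism. One small point worth making explicit: to compare $f_*[X]$ and $[Y]$ on $Y\setminus B$ you implicitly use $A=f^{-1}(B)$, so that the proper map $f$ restricts to a proper map $X\setminus A\to Y\setminus B$ and the Borel--Moore pushforward commutes with restriction; this is automatic once you choose $B$ as the complement of the maximal open over which $f$ is an isomorphism (enlarged to contain $Y_{sing}$), which you may do without loss.
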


\subsection{Hodge Theory for klt varieties}
\label{hodgetheory}

When $X$ is a compact complex variety with rational singularities, Namikawa noted in \cite[p.~143]{Nam06} that for every resolution of singularities $\nu\colon\tilde X\to X$ the pullback morphism $\nu^*\colon\H^2(X,\,\C)\to\H^2(\tilde{X},\,\C)$ is an embedding and therefore the Hodge structure on $\H^2(X,\,\C)$ is pure of weight two. By similar arguments like in \cite[Lemma~2.7]{Kal06} it can be seen that for every reflexive $2$-form $\alpha\in\H^0(X,\,\Omega_X^{[2]})$ for which $f^*\alpha$ extends to a form $\tilde{\alpha}\in\H^0(\tilde{X},\,\Omega^2_X)$ there is a class $\underline{\alpha}\in\H^2(X,\,\C)$ with $f^*\underline{\alpha}=[\tilde{\alpha}]\in\H^2(\tilde{X},\,\C)$. When we have an extension theorem for reflexive forms on $X$ like Theorem~\ref{extension}, it follows that $\alpha\mapsto\underline{\alpha}$ defines an isomorphism $\H^0(X,\,\Omega_X^{[2]})\cong\H^{2,0}(X)$. The author worked this out and drew the following consequences in his thesis.

\begin{thm}[Hodge decomposition of $\H^2(X,\,\C)$, \protect{\cite[Theorem~37]{Sch17b}, \emph{cf.} also \cite[B.2.7--B.2.9]{Kir15}, \cite[Proposition~6.9]{GKP16}}]
	\label{hdec}
	When $X$ is a complex projective klt variety, then we have $\H^{2,0}(X)\cong\H^0(X,\,\Omega_X^{[2]})$ and $\H^{0,2}(X)\cong\H^2(X,\,\O_X)$. For every resolution $\nu\colon\tilde X\to X$ the pullback morphism $\nu^*\colon\H^{a,b}(X)\to\H^{a,b}(\tilde{X})$ is bijective for $(a,b)=(2,0)$ or $(0,2)$ and injective for $(a,b)=(1,1)$.
\end{thm}

\begin{rem}
	\label{notationinclusion}
	Let $X$ be a complex projective klt variety. Every reflexive two-form $\alpha$ on $X$ defines by Theorem~\ref{hdec} a unique cohomology class in $\H^2(X,\,\C)$ that we denote by $\underline{\alpha}$. When $f\colon Y\to X$ is a morphism of complex projective klt varieties and $\tilde{\alpha}$ the extension of $f^*\alpha$ to $Y$ by Theorem~\ref{extension}, then we have $\underline{\tilde{\alpha}}=f^*\underline{\alpha}$ in $\H^2(Y,\,\C)$ because pulling back is a morphism of Hodge structures.
\end{rem}

\begin{cor}[Bilinear relations on klt varieties, \protect{\cite[Corollary~42]{Sch17b}}]
	\label{bilrel}
	Let $X$ be an $n$-dimensional, complex projective klt variety. Every ample class ${a\in\H^2(X,\,\R)}$ induces a sesquilinear form $\psi_{X,a}$ on $\H^k(X,\,\C)$ defined by
	\[(v,w)\mapsto(-1)^{\frac{k(k-1)}{2}}\cdot\int_X v\cup\overline{w}\cup a^{n-k}.\] The so-called \emph{Hodge-Riemann bilinear relation} $i^{p-q}\cdot\psi_{X,a}(v,v)>0$ holds for any non-zero $v$ with ${v\cup a^{n-k+1}=0}$ that can be written as the cup product of classes $v_1,\ldots,v_r\in\H^2(X,\,\C)$ .
	
	We write $p\defeq\sum p_j$, $q\defeq\sum q_j$ and $k\defeq p+q$ where $(p_j,q_j)$ are such that the $\alpha_j$ lie in the $\H^{p_j,q_j}(X)$-part of the Hodge decomposition of $\H^2(X,\,\C)$.
	\qed
\end{cor}

\subsection{Properties of symplectic varieties}
\label{propertiessection}
A symplectic variety has trivial canonical class, as the top exterior power of the symplectic form trivializes the canonical sheaf. It follows that symplectic varieties have canonical singularities \cite[Proposition 1.3]{Bea00}, \cite[Corollary~5.24]{KM98}. Furthermore, Namikawa showed that a symplectic variety has at most terminal singularities if and only if its singular locus is at least of codimension four, \cite[Corollary~1]{Nam01a}.

We point out that Matsushita uses in \cite{Mat01} the property $\h^2(X,\,\O_X)=1$ of irreducible symplectic varieties instead of $\h^0(X,\,\Omega^{[2]}_X)=1$. It follows from Theorem~\ref{hdec} that these two conditions are equivalent by the Hodge symmetry.

\begin{pro}\label{termismatsushita}
	Every terminal model $\pi\colon Y\ra X$ of an irreducible symplectic variety $(X,\omega)$ is a Namikawa symplectic variety with the extension $\omega'\in\H^0(Y,\,\Omega^{[2]}_Y)$ of $\pi^*\omega$ to $Y$ from Theorem~\ref{extension}.
\end{pro}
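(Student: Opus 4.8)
The plan is to verify the three defining conditions of a Namikawa symplectic variety for $(Y,\omega')$ by transporting them from $X$ along the crepant birational morphism $\pi$, exploiting that $Y$ is $\Q$-factorial and terminal by construction of the terminal model and comparing cohomology through a common resolution.

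First I would record what Theorem~\ref{term} supplies for free. By Lemma~\ref{comparision}\eqref{symplsing} the variety $X$ has canonical singularities and $K_X=0$, so terminal models exist; by construction $Y$ is $\Q$-factorial with terminal singularities and $\pi^*K_X=K_Y$, whence $K_Y=0$. Since $K_X=0$ is Cartier and $X$ is canonical, $(X,0)$ is klt, so $X$ is a klt variety and Theorem~\ref{extension} applies to $\pi\colon Y\ra X$, producing the extension $\omega'\in\H^0(Y,\,\Omega^{[2]}_Y)$ of $\omega$, which agrees with $\pi^*\omega$ over the locus where $\pi$ is an isomorphism.

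The key step is to show that $\omega'$ is a symplectic form, i.e.\ that it is non-degenerate on all of $Y_{reg}$ and not merely over the isomorphism locus. Here I would argue exactly as in the proof of Lemma~\ref{comparision}\eqref{symplsing}: the top wedge power $\omega'^n$ is a global section of $\Omega^{[2n]}_Y\cong\O_Y(K_Y)\cong\O_Y$, hence a constant, and it is non-zero because $\omega$ is non-degenerate on $X_{reg}$ and $\pi$ is birational. Thus $\omega'^n$ is nowhere vanishing on $Y$, and by linear algebra the non-vanishing of $\omega'^n$ at a smooth point is equivalent to non-degeneracy of $\omega'$ there, so $\omega'$ is symplectic. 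Since $Y$ has terminal, hence canonical, singularities and carries a symplectic form, $(Y,\omega')$ is a symplectic variety by the equivalent description in the Remark following Lemma~\ref{comparision}.

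It then remains to check the two numerical conditions for irreducibility, and here I would use the common resolution $\tilde{X}\stackrel{\tilde\pi}{\lra}Y\stackrel{\pi}{\lra}X$, noting that $\tilde\pi$ is a resolution of $Y$ and $\nu=\pi\circ\tilde\pi$ a resolution of $X$. Applying Theorem~\ref{hdec} to both $X$ and $Y$, the isomorphisms $\nu^*_{0p}$ and $\tilde\pi^*_{0p}$ identify $\H^{0,p}(X)\cong\H^{0,p}(\tilde{X})\cong\H^{0,p}(Y)$, and likewise $\nu^*_{p0},\tilde\pi^*_{p0}$ identify the $(p,0)$-parts; together with the canonical isomorphisms $\kappa_{0p},\kappa_{p0}$ this yields $\h^1(Y,\,\O_Y)=\h^1(X,\,\O_X)=0$ and $\h^0(Y,\,\Omega^{[2]}_Y)=\h^0(X,\,\Omega^{[2]}_X)=1$. (Alternatively the bound $\h^0(Y,\,\Omega^{[2]}_Y)\leq1$ can be obtained directly by pushing a reflexive $2$-form forward across the isomorphism locus and extending reflexively over the codimension $\geq2$ image of the exceptional set.) Hence $(Y,\omega')$ is an irreducible symplectic variety that is $\Q$-factorial with terminal singularities, so it is Namikawa symplectic by Lemma~\ref{comparision}\eqref{primnam}. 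The only genuinely delicate point I expect is the global non-degeneracy of $\omega'$ in the key step; everything else is bookkeeping with the cited results.
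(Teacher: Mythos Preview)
Your proposal is correct and follows essentially the same route as the paper's proof: both use crepancy to get $K_Y=0$, argue non-degeneracy of $\omega'$ via the observation that $\omega'^n$ is a nowhere-vanishing section of $\O_Y(K_Y)\cong\O_Y$, and then read off the irreducibility conditions $\h^1(Y,\O_Y)=0$ and $\h^0(Y,\Omega^{[2]}_Y)=1$ from Theorem~\ref{hdec} via a common resolution, concluding with Lemma~\ref{comparision}\eqref{primnam}. The differences are purely cosmetic.
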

\begin{proof}
Namikawa proved that $Y$ is again symplectic \cite[Remark~1]{Nam06}. Let $\tilde{\pi}\colon\tilde{X}\ra Y$ be a resolution of $Y$. Then $\h^1(Y,\,\O_Y)=\h^1(\tilde{X},\,\O_{\tilde{X}})=\h^1(X,\,\O_X)=0$ as the irregularity is a birational invariant for varieties with rational singularities and $\h^0(Y,\,\Omega^{[2]}_Y)=\h^0(X,\,\Omega^{[2]}_X)=1$ by Theorem~\ref{extension}. Hence $(Y,\tilde{\omega})$ is Namikawa symplectic by \cite[Corollary~1]{Nam01a}.
\end{proof}

\begin{pro}
\label{termisprimitive}
Every terminal model $\pi\colon Y\ra X$ of a primitive symplectic variety $(X,\omega)$ is primitive symplectic with the extension $\omega'\in\H^0(Y,\,\Omega^{[2]}_Y)$ of $\pi^*\omega$ to $Y$.
\end{pro}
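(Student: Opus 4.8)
The plan is to first reduce to the structural result we already have, and then to upgrade the irreducible-symplectic conclusion of Proposition~\ref{termismatsushita} to a primitive-symplectic one by comparing quasi-\'etale covers of $Y$ with quasi-\'etale covers of $X$. First I would observe that a primitive symplectic variety is in particular irreducible symplectic: applying Definition~\ref{symplectic} to the identity $\id\colon X\ra X$, which is trivially quasi-\'etale, forces $\H^0(X,\Omega^{[1]}_X)=0$ and $\h^0(X,\Omega^{[2]}_X)=1$, and the Hodge symmetry on $\H^1(X,\C)$ provided by Theorem~\ref{hdec} turns the vanishing of reflexive $1$-forms into $\h^1(X,\O_X)=0$. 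Hence Proposition~\ref{termismatsushita} applies and shows that $(Y,\omega')$ is already a Namikawa symplectic variety with $\omega'=\pi^*\omega$; it remains only to verify the generation condition of Definition~\ref{symplectic} for every quasi-\'etale morphism onto $Y$.

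So let $g\colon Y'\ra Y$ be an arbitrary quasi-\'etale morphism. As $Y'$ is a normal variety, $\C(Y')$ is a finite field extension of $\C(Y)=\C(X)$, and I would let $a\colon X'\ra X$ be the normalization of $X$ in $\C(Y')$. By the universal property of normalization the generically finite morphism $\pi\circ g\colon Y'\ra X$ factors as $Y'\stackrel{b}{\lra}X'\stackrel{a}{\lra}X$, where $a$ is finite, $b$ is projective and birational, and both $X'$ and $Y'$ are normal and projective.

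The crux of the argument is to show that $a\colon X'\ra X$ is again quasi-\'etale. Here I would use that the crepant birational morphism $\pi\colon Y\ra X$ is an isomorphism over an open set $U\subseteq X$ whose complement has codimension at least two: every $\pi$-exceptional prime divisor is contracted, so its image has codimension $\ge 2$, while the higher-codimension part of the exceptional locus cannot gain dimension under $\pi$. Over $U$ the morphism $a$ is identified with the restriction of $g$, hence is \'etale away from a set of codimension $\ge 2$; and since $a$ is finite, the part of $X'$ lying over $X\setminus U$ again has codimension $\ge 2$. Therefore $a$ is \'etale outside codimension two, i.e.\ quasi-\'etale. I expect this codimension bookkeeping, i.e.\ ruling out that the passage to $X$ creates new branching in codimension one, to be the main obstacle.

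Finally I would transfer the generation property across $b$. Since $X'$ is a quasi-\'etale cover of the canonical variety $X$ and $Y'$ a quasi-\'etale cover of the terminal variety $Y$, discrepancies are preserved and both $X'$ and $Y'$ are klt. By the extension theorem (Theorem~\ref{extension}), pullback along a common resolution $W$ dominating $Y'$ and $X'$ identifies $\H^0(X',\Omega^{[p]}_{X'})\cong\H^0(W,\Omega^p_W)\cong\H^0(Y',\Omega^{[p]}_{Y'})$ for every $p$, compatibly with $b^*$. These isomorphisms respect wedge products, so $b^*$ induces an isomorphism of graded $\C$-algebras $\bigoplus_p\H^0(X',\Omega^{[p]}_{X'})\cong\bigoplus_p\H^0(Y',\Omega^{[p]}_{Y'})$ sending $a^*\omega$ to $b^*a^*\omega=(\pi\circ g)^*\omega=g^*\omega'$. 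Because $X$ is primitive symplectic the left-hand side equals $\C[a^*\omega]$, whence the right-hand side equals $\C[g^*\omega']$. As $g$ was an arbitrary quasi-\'etale morphism, $(Y,\omega')$ satisfies Definition~\ref{symplectic} and is primitive symplectic.
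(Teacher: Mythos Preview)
Your proposal is correct and follows essentially the same route as the paper. Your normalization of $X$ in $\C(Y')$ \emph{is} the Stein factorization of $\pi\circ g$ that the paper uses, so the diagrams coincide; you are simply more explicit than the paper about why the finite part $a\colon X'\to X$ is quasi-\'etale (using the codimension-two complement of the isomorphism locus of $\pi$), and you add the preliminary observation that primitive implies irreducible symplectic so that Proposition~\ref{termismatsushita} supplies the symplectic structure on $Y$.
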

\begin{proof}
Let $g\colon Y'\ra Y$ be quasi-\'{e}tale with $Y'$ normal. Then the Stein factorization of $\pi\circ g$, \cite[Corollary~III.11.5]{Har77}, gives a morphism $\pi'\colon Y'\to X'$ with connected fibers and a finite morphism $h\colon X'\to X$ with $\pi\circ g=h\circ\pi'$. Hence $\pi'$ is birational because $\pi\circ g$ is generically finite. If $g$ is \'{e}tale outside of $V\subset Y'$, then $h$ is \'{e}tale outside of $\pi'(V)$ because $\pi$ and $\pi'$ induce isomorphisms on the function fields. Thus $h$ is a quasi-\'{e}tale. We note that normal, quasi-\'{e}tale covers of klt varieties are also klt varieties by \cite[Proposition~5.20]{KM98}. A resolution $\tilde{X}$ of $X'$ is also a resolution of $Y'$ and Theorem~\ref{extension} implies ${\H^0(Y',\,\Omega_{Y'}^{[p]})\cong\H^0(\tilde{X},\,\Omega_{\tilde{X}}^p)\cong\H^0(X',\,\Omega_{X'}^{[p]})}$ for all $p$. As $X$ is primitive symplectic and pulling back reflexive forms is functorial, the reflexive forms on $Y'$ get generated by $(\pi')^*h^*\omega=g^*\omega'$.
\end{proof}

While we cannot prove that smooth irreducible symplectic varieties are irreducible symplectic manifolds as they were defined by Beauville, we can prove the following Lemma, which is related to \cite[Proposition~A.1]{HN11}.

\begin{lem}
\label{simplyconnected}
Let $(X,\omega)$ be a smooth, irreducible symplectic variety in the sense of Definition~\ref{symplectic}. Then $X$ is either simply connected or an \'{e}tale quotient of an Abelian variety by a finite group of biholomorphic automorphisms.
\end{lem}
\begin{proof}
By the Beauville-Bogomolov decomposition theorem, there is a finite \'{e}tale covering $\pi\colon\hat X\to X$ that splits as a product $\hat{X}\cong T\times X'\times Y$, where $T$ is a complex torus, $X':=\prod_{i=1}^{k} X_i$ is the product of irreducible symplectic manifolds $X_i$, and $Y$ a product of at least three-dimensional Calabi-Yau manifolds.
By going over to a finite \'{e}tale covering of $\hat X$ we may assume $\pi$ to be a Galois covering, so $X\cong \hat X/G$ for $G\subset\Aut\hat X$ a group of biholomorphic automorphisms of $\hat X$ with $|G|=\deg\pi$.

We can identify $\H^0(X,\,\Omega^2_X)$ with the space of $G$-invariant holomorphic $2$-forms $\H^2(\hat{X},\,\Omega^2_{\hat{X}})^G$, hence the latter one is generated by the pullback $\pi^*\omega$ of the symplectic form.
As
$$\h^0(X',\,\Omega^1_{X'})=\h^0(Y,\,\Omega^1_Y)=\h^0(Y,\,\Omega^2_Y)=0,$$ we get by the Künneth formula a decomposition $\pi^*\omega=\pi_T^*\eta+\pi_{X'}^*\omega'$, where $\pi_T\colon\hat X\to T$ and $\pi_{X'}\colon\hat{X}\to X'$ are the projections, and $\eta\in\H^0(T,\,\Omega^2_T)$, $\omega'\in\H^0(X',\,\Omega^2_{X'})$ are $2$-forms on the factors.
As $\pi$ is \'{e}tale, $\pi^*\omega$ is also non-degenerate, hence $Y$ is trivial and the forms $\eta$ and $\omega'$ are non-degenerate as well.

For every $f\in\Aut(\hat X)$ there are automorphisms $g\in\Aut(T)$ and $h\in\Aut(X')$ such that $f=(g,h)$
\cite[p.~8, Lemma]{Bea83a}, so $\pi_T^*\eta$ and $\pi_{X'}^*\omega'$ are $G$-invariant holomorphic $2$-forms on $\hat{X}$.
As $\h^0(X,\,\Omega^2_X)=1$, it follows that either $T$ or $X'$ has to be trivial. If $X'$ is trivial then $X\cong T/G$ and the proof is complete.\footnote{It is unclear if this case can occur. The argument in \cite[Proposition~A.1]{HN11} to exclude the torus factor is incomplete.}


Hence we can assume from now on that $T$ is trivial, so $X\cong X'/G$. In this case, we show that every automorphism $f\in\Aut(X')$ has a fixed point. For this, let $\omega_i$ be the pullbacks of the symplectic forms of the factors $X_i$ to $X'$.
By \cite[p.~762f, prop.~3-4]{Bea83} and the Künneth formula, $\bigoplus_{p\in\N}\H^0(X',\,\Omega^{p}_{X'})$ is generated by the wedge products of the $\omega_i$. In particular $\H^{j,0}(X')=0$ for all odd $j$.
As $(X,\omega)$ is symplectic, $f^*$ preserves a symplectic form on $X'$. By rescaling the $\omega_i$ we may assume it to be the sum of the $\omega_i$.

By \cite[p.~10, b)+c)]{Bea83a} the automorphism $f$ acts on $X'$ by first possibly permuting isomorphic factors $X_i$ and then applying automorphisms $f_i\in\Aut(X_i)$ on each factor.
In particular there is a permutation $\sigma\in\mathcal{S}_k$ such that for all $i$ we have that $f^*\omega_i$ is a multiple of $\omega_{\sigma(i)}$. As $f^*$ preserves the sum of the $\omega_i$, it simply permutes the $\omega_i$, as well as their wedge products.
Therefore $\tr(f^*|_{H^{j,0}(X')})$ is zero for all odd $j$, non-negative for all even $j$ and it equals one for $j=0,2\dim(X)$.
Thus $f$ has a fixed point by the holomorphic Lefschetz fixed point formula, \cite[p.~426]{GH}.
As we assumed $X$ to be smooth, it follows that $G$ acts trivially on $X'$, so $X\cong X'$ is simply connected.
\end{proof}

\subsection{Lagrangian fibrations}
\label{lagrangiansection}

We show in Theorem~\ref{equidimensional} that Lagrangian fibrations are equidimensional because also their singular fibers consist of Lagrangian subvarieties. A straightforward generalization of Matsushita's proof of \cite[Corollary~1]{Mat00}, see Proposition~\ref{classvanishes}, works for all fiber components that do not completely lie in the singular locus. To complete the proof of Theorem~\ref{equidimensional} we use an additional inductive argument to make sure that there are no fiber components in $X_{\sing}$.

\begin{defi}[Lagrangian subvariety and fibration]\label{deflag}

Let $(X,\omega)$ be a symplectic variety and $F\subset X$ a subvariety with $F\not\subset X_{\sing}$ and ${\dim F=\frac{1}{2}\dim X}$. If on every \emph{embedded\footnote{A resolution of singularities $\nu\colon\tilde{X}\to X$ that is an isomorphism over $X_{\reg}$ and for which the \emph{strict transform} $\overline{\nu^{-1}(F\cap X_{\reg})}$ of $F$ is smooth} resolution of singularities} $\nu\colon\tilde{X}\to X$ of $(X,F)$ the extension $\tilde{\omega}$ of $\nu^*\omega$ vanishes on the strict transform of $F$, then we call $F$ a \emph{Lagrangian subvariety} of $X$. A \emph{Lagrangian fibration} of $X$ is a surjective morphism $f\colon X\ra B$ with connected fibers onto a normal, complex projective variety $B$, such that the general fiber is a Lagrangian subvariety of $X$.
\end{defi}

The notion of Lagrangian subvarieties does not depend on the chosen embedded resolution because for two resolutions we can go over to a common resolution. Our definition is consistent with Matsushita's definition, \cite[Definition~1]{Mat00}, \cite[Definition~1.2]{Mat05}. The following criteria can be used to test if a subvariety is Lagrangian.

\begin{lem}[Criteria of Lagrangian subvarieties]
\label{lag}
Let $F$ be a subvariety of a symplectic variety $(X,\omega)$ with $F\not\subset X_{\sing}$ and $\dim F\geq\frac{1}{2}\dim X$. Then the following assertions hold.
\begin{enumerate}
\item\label{part1} $F$ is Lagrangian if and only if $\restr{\omega}{X_{\reg}\cap F_{\reg}}$ vanishes as a Kähler differential on $X_{\reg}\cap F_{\reg}$.
\item\label{part2} $F$ is Lagrangian if ${i^*\underline{\omega}=0\in\H^2(F,\,\C)}$ for the inclusion $i\colon F\inj X$. The converse holds if $F$ is a klt variety.
\end{enumerate}
\end{lem}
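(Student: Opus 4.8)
The plan is to reduce both parts to the behaviour of the single holomorphic two-form $\tilde\omega|_{\tilde F}$ on one smooth model of $F$. Fix an embedded resolution $\nu\colon\tilde X\ra X$ of $(X,F)$ as in Definition~\ref{deflag}, let $\tilde F\subset\tilde X$ be the strict transform of $F$, and set $\mu\defeq\restr{\nu}{\tilde F}\colon\tilde F\ra F$. Since $F$ is irreducible, $\tilde F$ is an irreducible smooth variety and $\mu$ is a resolution of singularities of $F$; moreover $\nu\circ j=i\circ\mu$ for the inclusions $j\colon\tilde F\inj\tilde X$ and $i\colon F\inj X$. Because $F\not\subset X_{sing}$, the locus $X_{reg}\cap F_{reg}$ is a dense open subset of $F$, and it contains a dense open $U$ over which $\nu$ is an isomorphism. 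On $\mu^{-1}(U)$ the extension $\tilde\omega$ agrees with $\nu^*\omega$, so $\restr{(\tilde\omega|_{\tilde F})}{\mu^{-1}(U)}=\mu^*\big(\restr{\omega}{U}\big)$. As $\tilde\omega|_{\tilde F}$ is a holomorphic form on the connected manifold $\tilde F$, it vanishes identically as soon as it vanishes on the dense open set $\mu^{-1}(U)$, and symmetrically $\restr{\omega}{X_{reg}\cap F_{reg}}$ vanishes identically as soon as it vanishes on $U$.

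For part~\eqref{part1}, I would observe that $F$ is Lagrangian precisely when $\tilde\omega|_{\tilde F}=0$ (the defining condition, together with $\dim F=\tfrac12\dim X$). By the identity principle just described, $\tilde\omega|_{\tilde F}=0$ is equivalent to $\restr{\omega}{X_{reg}\cap F_{reg}}=0$. It then remains to recover the dimension: at any point of $U$ the tangent space $T_xF$ is isotropic for the non-degenerate form $\omega_x$, so $\dim F\le\tfrac12\dim X$, and with the hypothesis $\dim F\ge\tfrac12\dim X$ this forces equality whenever the form vanishes. Thus part~\eqref{part1} follows.

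For the forward implication of part~\eqref{part2}, I would use Remark~\ref{notationinclusion}: the extension $\tilde\omega$ represents $\nu^*\underline\omega$ in de Rham cohomology, hence $[\tilde\omega|_{\tilde F}]=j^*\nu^*\underline\omega=\mu^*i^*\underline\omega$ in $\H^2(\tilde F,\C)$. If $i^*\underline\omega=0$ this class vanishes, and since $\tilde F$ is smooth projective the inclusion $\H^{2,0}(\tilde F)=\H^0(\tilde F,\Omega^2_{\tilde F})\inj\H^2(\tilde F,\C)$ is injective, forcing $\tilde\omega|_{\tilde F}=0$; by part~\eqref{part1}, $F$ is Lagrangian. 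For the converse, assume $F$ is Lagrangian and klt. Then $\tilde\omega|_{\tilde F}=0$, so $\mu^*i^*\underline\omega=0$. Here Theorem~\ref{hdec} applied to $F$ supplies the purity of $\H^2(F,\C)$ and the injectivity of $\mu^*_{20}\colon\H^{2,0}(F)\to\H^{2,0}(\tilde F)$. Since $i^*$ is a morphism of the pure weight-two Hodge structures on $\H^2(X,\C)$ and $\H^2(F,\C)$, it preserves Hodge type and sends the $(2,0)$-class $\underline\omega$ into $\H^{2,0}(F)$; injectivity of $\mu^*_{20}$ then yields $i^*\underline\omega=0$.

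The routine ingredients are the existence of the embedded resolution and the identity principle for holomorphic forms on connected manifolds. The main obstacle is the converse of part~\eqref{part2}: this is exactly where the klt hypothesis is indispensable, since without it $\H^2(F,\C)$ need not be pure of weight two and $\mu^*_{20}$ need not be injective, so one cannot descend the vanishing of $\mu^*i^*\underline\omega$ from $\tilde F$ back to $F$. Checking that $i^*$ respects the Hodge decompositions of these klt cohomology groups, so that $i^*\underline\omega$ genuinely lands in $\H^{2,0}(F)$ where $\kappa_{20}$ and $\mu^*_{20}$ are available, is the delicate point I would treat most carefully.
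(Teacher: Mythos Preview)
Your argument is correct and matches the paper's proof essentially step for step: fix an embedded resolution, use that $\tilde\omega|_{\tilde F}$ and $\omega|_{X_{reg}\cap F_{reg}}$ agree on a dense open to get the equivalence in~\eqref{part1}, recover $\dim F=\tfrac12\dim X$ from isotropy, and for~\eqref{part2} identify $[\tilde\omega|_{\tilde F}]=\mu^*i^*\underline\omega$ via Remark~\ref{notationinclusion}. One simplification for the converse in~\eqref{part2}: Theorem~\ref{hdec} actually gives injectivity of the \emph{full} pullback $\mu^*\colon\H^2(F,\C)\to\H^2(\tilde F,\C)$ (each Hodge piece $\mu^*_{ab}$ is injective and the structure is pure), so you can conclude $i^*\underline\omega=0$ directly from $\mu^*i^*\underline\omega=0$ without the extra step of checking that $i^*\underline\omega$ lands in $\H^{2,0}(F)$.
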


\subsubsection*{Proof of \eqref{part1}.}
Let $\nu\colon\tilde{X}\ra X$ be an embedded resolution of $(X,F)$ with strict transform $\tilde{i}\colon\tilde{F}\inj\tilde{X}$ of $F$ and extension $\tilde{\omega}$ of $\nu^*\omega$ to $\tilde{X}$. As $\nu$ is an isomorphism over a dense open subset, the assumption $F\not\subset X_{\sing}$ implies that the conditions $\restr{\omega}{X_{\reg}\cap F_{\reg}}=0$ and $\restr{\tilde{\omega}}{\tilde{F}}=0$ are equivalent. Moreover, linear algebra shows that both latter conditions imply ${\dim F\leq\frac{1}{2}\dim{X}}$ because $\omega$ is non-degenerate on $X_{\reg}$, \cite[Lemma~1.1]{NamikawaDeformationTheory}. Together with the assumption $\dim F\geq\frac{1}{2}\dim{X}$ this implies that the conditions $\restr{\omega}{X_{\reg}\cap F_{\reg}}=0$ and $\restr{\tilde{\omega}}{\tilde{F}}=0$ are both equivalent to $F$ being a Lagrangian subvariety of $X$.
\subsubsection*{Proof of \eqref{part2}.}
By Remark~\ref{notationinclusion} we have $\underline{\tilde{\omega}}=\nu^*\underline{\omega}$ and therefore
\[
\underline{\restr{\tilde{\omega}}{\tilde{F}}}=\tilde{i}^*\nu^*\underline{\omega}=\nu|_{\tilde{F}}^*i^*\underline{\omega}\in\H^2(\tilde{F},\,\C).
\]
Hence the condition $i^*\underline{\omega}=0$ implies $\restr{\tilde{\omega}}{\tilde{F}}=0$, which, like in part~\eqref{part1}, under the given assumptions is equivalent to $F$ being Lagrangian. The converse implication holds if $F$ is a klt variety because then ${\nu|_{\tilde{F}}^*\colon\H^2(F,\,\C)\ra\H^2(\tilde{F},\,\C)}$ is injective by Theorem~\ref{hdec}.\qed

The following result is a straightforward generalization of \cite[Corollary~1]{Mat00}.

\begin{pro}
\label{classvanishes}
Let $f\colon X\ra B$ be a Lagrangian fibration of a symplectic variety $(X,\omega)$. Then for every fiber $F$ of $f$ the pullback $i^*\underline{\omega}$ of the symplectic class vanishes in $\H^2(F,\,\C)$, where $i\colon F\ra X$ denotes the inclusion. Thus also $i'^*\underline{\omega}=0\in\H^2(F',\,\C)$ for every fiber component $i'\colon F'\ra X$ of $F$.
\end{pro}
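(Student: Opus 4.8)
The plan is to establish the vanishing $i^*\underline{\omega}=0$ in $\H^2(F,\,\C)$ for an arbitrary fiber $F=f^{-1}(b)$; the statement for a fiber component $i'\colon F'\inj X$ is then purely formal, since writing $j\colon F'\inj F$ for the inclusion one has $i'=i\circ j$ and hence $i'^*\underline{\omega}=j^*i^*\underline{\omega}=0$. To treat a single fiber I would spread the Lagrangian condition out from the general fiber over the whole base.

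Let $B^\circ\subseteq B$ be the dense open locus over which $f$ is smooth; its fibers are the general fibers and are Lagrangian, so by Lemma~\ref{lag}~\eqref{part1} the form $\omega$ restricts to zero on $X_{reg}\cap F_{reg}$ for each of them. Now consider the relative part of the symplectic form, namely the image of $\omega$ under the natural surjection $\restr{\Omega^2_{X_{reg}}}{U}\ra\wedge^2\Omega^1_{U/B}$ on the open submersion locus $U\subseteq X_{reg}$ of $f$. On $U$ this target is locally free, hence torsion free, and the section vanishes on the dense open subset $U\cap f^{-1}(B^\circ)$; therefore it vanishes on all of $U$. Since $U\cap F=X_{reg}\cap F_{reg}$ for every fiber, this yields $\restr{\omega}{X_{reg}\cap F_{reg}}=0$ for \emph{all} $F$, and Lemma~\ref{lag}~\eqref{part1} shows that every fiber component not contained in $X_{sing}$ is Lagrangian.

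It remains to upgrade this fiberwise vanishing of the \emph{form} to the vanishing of the \emph{class} $i^*\underline{\omega}\in\H^2(F,\,\C)$. For this I would pass to an embedded resolution $\nu\colon\tilde{X}\ra X$ of $(X,F)$, set $\tilde{f}=f\circ\nu$, and use $\nu^*\underline{\omega}=\underline{\tilde{\omega}}$ from Remark~\ref{notationinclusion}. Running the same relative-form argument on the smooth projective variety $\tilde{X}$ shows that the closed holomorphic $2$-form $\tilde{\omega}$ restricts to zero on every fiber of $\tilde{f}$, so its restricted de Rham class vanishes there. By functoriality $\big(\restr{\nu}{\nu^{-1}(F)}\big)^*i^*\underline{\omega}=\underline{\restr{\tilde{\omega}}{\tilde{f}^{-1}(b)}}=0$, and Theorem~\ref{hdec} supplies the injectivity needed to descend this to $i^*\underline{\omega}=0$.

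I expect this last step to be the genuine obstacle. A singular special fiber can carry second cohomology that is absent from the smooth fibers (vanishing-cycle classes), so the vanishing of the restricted form does not by itself force the restricted class to vanish, and a section of $R^2f_*\C$ that is zero over $B^\circ$ need not be zero along the discriminant. The decisive extra input is the Hodge type: $\underline{\omega}$ is of type $(2,0)$, so its fiberwise restrictions assemble into a \emph{flat holomorphic} section that is determined by its generic value $0$ and cannot acquire a contribution from the vanishing cycles, which sit in lower Hodge level. Keeping every class of pure type $(2,0)$ while resolving the singular fibers, and combining this with the injectivity statements of Theorem~\ref{hdec} on the klt pieces, is what I expect to make the descent go through.
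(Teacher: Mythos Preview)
Your relative-differentials argument for the \emph{form} is fine and reproduces the Lagrangian condition on every fiber component meeting $X_{reg}$. The problem is precisely where you locate it: the passage from the vanishing of the restricted form to the vanishing of the restricted \emph{class} on a special fiber. Two of the steps you invoke do not hold as stated. First, after passing to the embedded resolution $\tilde X$, the fiber $\tilde f^{-1}(b)$ is in general singular (reducible, non-reduced), so the vanishing of $\tilde\omega$ on its smooth locus does not force $\underline{\tilde\omega|_{\tilde f^{-1}(b)}}=0$ in $\H^2(\tilde f^{-1}(b),\C)$. Second, your descent via Theorem~\ref{hdec} needs $F$ to be a klt variety and $\nu^{-1}(F)\to F$ to be a resolution; a special fiber is typically neither klt nor normal, and $\nu^{-1}(F)$ is not its resolution, so that injectivity is unavailable. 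Your final paragraph correctly senses that the $(2,0)$ Hodge type is the key, but the assertion that the fiberwise restrictions form a ``flat holomorphic section'' of $R^2f_*\C$ determined by its generic value is exactly what is at stake and cannot be used without proof; $R^2f_*\C$ is not locally constant near the discriminant.

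The paper resolves this by switching from the constant sheaf to the structure sheaf. Replace $\underline\omega$ by its conjugate $\underline{\overline\omega}\in\H^{0,2}(X)\cong\H^2(X,\O_X)$ and use the Leray edge map $\H^2(X,\O_X)\to\H^0(B,R^2f_*\O_X)$: the image is a global section whose value at $b$ is $i^*\underline{\overline\omega}\in\H^2(F,\O_F)$. Since the general fiber is Lagrangian and has canonical singularities (Lemma~\ref{singfiber}), Lemma~\ref{lag}~\eqref{part2} gives generic vanishing of this section. Now the decisive input is Koll\'ar's torsion-freeness: because $\omega_X\cong\O_X$ (Lemma~\ref{comparision}~\eqref{symplsing}), the sheaf $R^2f_*\O_X\cong R^2f_*\omega_X$ is torsion-free (Theorem~\ref{torsionfree}), so the section is identically zero. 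Conjugating back yields $i^*\underline{\omega}=0$ for every fiber. This is the concrete replacement for your heuristic: the ``right'' sheaf in which the Hodge-type argument becomes rigorous is $\O_X$, not $\C$, and torsion-freeness of $R^2f_*\omega_X$ is what kills the potential contribution from the special fibers.
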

\begin{proof}
The Leray spectral sequence for $f$ and the sheaf $\O_X$ gives the map $d_2\colon \H^2(X,\,\O_X)\ra\H^0(B,\,R^2f_*\O_X)$. The class $\underline{\overline{\omega}}$ lies in $\H^2(X,\,\O_X)$. Let $i\colon F\ra X$ be the fiber over a point $p\in B$ with residue field $\C(p)$. Then the proper base change map $R^2f_*\O_X\otimes_{\O_X}\C(p)\stackrel{\sim}{\lra}\H^2(F,\,\O_F)$ sends $d_2(\underline{\overline{\omega}})\otimes1$ to $i^*\overline{\underline{\omega}}\in\H^2(F,\,\O_F)$.

As the general fiber is Lagrangian, not completely contained in $X_{\sing}$ and has canonical singularities by Lemma~\ref{singfiber}, the section $d_2(\underline{\overline{\omega}})$ vanishes at the general point $p\in B$ by Lemma~\ref{lag}. Therefore this section is torsion in $R^2f_*\O_X$. However, as $X$ is symplectic, the canonical sheaf of $X$ is trivial, so $\omega_X\cong\O_X$. Hence the sheaf $R^2f_*\O_X$ is torsion-free by Koll\'{a}r's torsion-freeness Theorem, which also holds in the singular case, see Theorem~\ref{torsionfree} in the appendix. Thus ${d_2(\underline{\overline{\omega}})=0}$, so for every fiber $i\colon F\ra X$ the class $i^*\underline{\omega}$ vanishes. Moreover $i'^*\underline{\omega}=0$ because $i'$ factors through $i$.
\end{proof}

\begin{thm}
\label{equidimensional}
Let $f\colon X\ra B$ be a Lagrangian fibration of a symplectic variety $(X,\omega)$. Then every component of each fiber of $f$ does not lie completely in $X_{\sing}$ and is Lagrangian. In particular $f$ is equidimensional. 
\end{thm}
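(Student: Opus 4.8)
The plan is to reduce the entire statement to the single claim that no component of a fiber lies in $X_{sing}$. I would start with the routine reductions. Since the general fiber is a Lagrangian subvariety it has dimension $\frac{1}{2}\dim X=n$, so $\dim B=\dim X-n=n$, and by upper semicontinuity of fiber dimension every component $F'$ of every fiber satisfies $\dim F'\geq n$. Proposition~\ref{classvanishes} gives $i'^*\underline{\omega}=0\in\H^2(F',\,\C)$ for the inclusion $i'\colon F'\inj X$. Whenever $F'\not\subset X_{sing}$ the hypotheses of Lemma~\ref{lag} are met, and part~\eqref{part2} turns this vanishing into the assertion that $F'$ is Lagrangian, so $\dim F'=n$. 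Hence, once we know that no fiber component lies in $X_{sing}$, all components are Lagrangian of dimension exactly $n$, every fiber has pure dimension $n=\dim X-\dim B$, and $f$ is equidimensional.

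The real content is to exclude a component $F'\subset X_{sing}$ with $\dim F'=d\geq n$. Lemma~\ref{lag} is genuinely unavailable here: its proof extracts the bound $d\leq n$ from the non-degeneracy of $\omega$ at the generic point of the strict transform $\tilde F'$, but for $F'\subset X_{sing}$ this strict transform lies over the singular locus, where the extended form $\tilde\omega$ may degenerate, so $\restr{\tilde\omega}{\tilde F'}=0$ yields no dimension bound. I would therefore induct on $\dim X$. The case $n=1$ is immediate, since $X_{sing}$ is then finite while $\dim F'\geq1$. For the inductive step I would pass to the stratification of $X$ into symplectic leaves: the generic point of $F'$ lies on a leaf $L$ of even codimension at least two, so $L$ is smooth of dimension $2m$ with $m\leq n-1$, its closure $\overline L$ is again a symplectic variety of dimension $2m<\dim X$, and $F'$ meets $L$ in a dense open set, so $F'\not\subset(\overline L)_{sing}$. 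The goal is to prove that $F'\cap L$ is isotropic for the symplectic form $\omega_L$ of the leaf; applying the proof of Lemma~\ref{lag}~\eqref{part1} inside $\overline L$ this forces $\dim F'\leq m\leq n-1$, contradicting $\dim F'\geq n$ and closing the induction.

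The main obstacle is exactly to propagate the isotropy of the fibers from the open leaf $X_{reg}$, where it comes from Proposition~\ref{classvanishes} and Lemma~\ref{lag}, down to the deeper leaf $L$. The difficulty is twofold: the intrinsic form $\omega_L$ is not the naive restriction of $\omega$, because $L\subset X_{sing}$; and the general fiber of $f$ meets only the open leaf, so $\overline L$ maps to a proper subvariety of $B$ and one cannot simply declare $\restr{f}{\overline L}$ a Lagrangian fibration and invoke the inductive hypothesis. The way through that I would pursue is to note that the Lagrangian condition makes the pulled-back functions $f^*\O_B$ Poisson-commute on $X$ for the Poisson structure dual to $\omega$, and that this commutation survives restriction to each symplectic leaf, forcing the fibers of $\restr{f}{L}$ to be isotropic in $(L,\omega_L)$ and in particular $F'\cap L$ isotropic. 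Establishing this descent—checking that the Poisson-commutativity restricts to $L$ and matching $\dim f(\overline L)$ with $\frac{1}{2}\dim\overline L$—is the subtle inductive step, and is where I expect the real work to lie.
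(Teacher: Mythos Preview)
Your reduction to the single claim ``no fiber component lies in $X_{sing}$'' via Proposition~\ref{classvanishes} and Lemma~\ref{lag}, together with the induction on $\dim X$ and the base case, matches the paper exactly.

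The gap is in the inductive step. Poisson-commutativity of $f^*\O_B$ restricted to a symplectic leaf $L$ of dimension $2m$ forces the fibers of $\restr{f}{L}$ to be \emph{coisotropic}, not isotropic: the Hamiltonian vector fields $X^L_{f_i}$ span an isotropic subspace of $T_xL$, and the fiber tangent space is its $\omega_L$-orthogonal, hence coisotropic. Concretely, since $\omega_L$ is non-degenerate the rank of $d(\restr{f}{L})$ equals $\dim\langle X^L_{f_i}\rangle\leq m$, so the fibers have dimension $\geq m$. This is the wrong inequality: you need $\dim F'\leq m$ to contradict $\dim F'\geq n$, and coisotropy is compatible with $\dim F'$ being large. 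Isotropy would follow only once you know $\dim f(L)=m$, which is exactly the statement you flagged as ``the subtle inductive step'' but did not supply an argument for; Poisson-commutativity alone gives only $\dim f(L)\leq m$.

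The paper closes this by invoking \cite[Theorem~3.1]{Mat15} as a black box: for every irreducible component $Z$ of $X_{sing}$ the normalization $\overline{Z}$ is a symplectic variety and the induced map $\overline{f}\colon\overline{Z}\to\overline{f(Z)}$ is a Lagrangian fibration. Granting this, the induction hypothesis (now genuinely used) applied to the lower-dimensional $\overline{Z}$ makes $\overline{f}$ equidimensional, so every fiber of $\restr{f}{Z}$ has dimension at most $\tfrac12\dim Z\leq n-1$; hence no $F'$ with $\dim F'\geq n$ can sit inside $Z$. Your Poisson-bracket idea is morally the right intuition behind Matsushita's result, but carrying it through to an actual proof that the general fiber of $\overline{f}$ is Lagrangian (equivalently, that $\dim f(Z)=\tfrac12\dim Z$) requires more than commutativity---Matsushita uses Kaledin's formal product decomposition of symplectic singularities---and is precisely what the paper outsources.
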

\begin{proof}
We prove this theorem by induction over the dimension of $X$. If $X$ is a surface, all fibers of $f$ are curves. As $X$ is normal, it has only isolated singularities and no fiber component can lie in $X_{\sing}$. By Proposition~\ref{classvanishes} the symplectic class vanishes on every fiber component $F'$. Hence $F'$ is a Lagrangian subvariety of $X$ by Lemma~\ref{lag} and the theorem is proven for $\dim X=2$.

We assume now $2n\defeq\dim X>2$ and that the theorem is already proven for all lower dimensional symplectic varieties. Let $i'\colon F'\inj X$ be a component of a fiber $F$ of $f$. Let $Z$ be an irreducible component of $X_{\sing}$.  We consider the restriction $\restr{f}{Z}$ and the lifted morphism $\overline{f}\colon\overline{Z}\ra\overline{f(Z)}$ between the normalizations. Then by \cite[Theorem~3.1]{Mat15} the normalization $\overline{Z}$ is a symplectic variety and $\overline{f}$ is a Lagrangian fibration. By the induction hypothesis, we know that $\overline{f}$ is equidimensional, so every fiber of $\overline{f}$ has dimension at most $n-1$. Hence the dimension of all fibers of $\restr{f}{Z}$ is also at most $n-1$. However, by the fiber dimension theorem, \cite[Theorem~1.25]{Shaf94}, we have $\dim F'\geq n$, so $F'\not\subset Z$ and $F'$ cannot lie completely in $X_{\sing}$. Now Proposition~\ref{classvanishes} implies $i'^*\underline{\omega}=0$, so $F'$ is by Lemma~\ref{lag} an $n$-dimensional Lagrangian subvariety of $X$. This completes the induction.
\end{proof}

\section{Generalized Beauville-Bogomolov form}
\label{generalizedform}

Namikawa defined a generalized Beauville-Bogomolov form on every Namikawa symplectic variety $X$ by pulling back everything to a resolution of singularities $\tilde{X}$ and then calculating Beauville's formula \cite[p.~772]{Bea83}. We prefer working with Kirschner's idea to calculate all integrals directly on the singular cohomology of $X$, which makes explicit calculations easier. Both approaches are equivalent, compatible with pullbacks and, after a suitable normalizing, define for every irreducible symplectic variety a uniquely determined quadratic form $q_X$ on $\H^2(X,\,\C)$. 

\subsection{Definitions of the Beauville-Bogomolov form}
\begin{defi}
	[Beauville-Bogomolov form, cf.\,\protect{\cite[Theorem~8~(2)]{Nam01}}]
	\label{nbbform}
	Let $(X,\omega)$ be a $2n$-dimensional, irreducible symplectic variety with a resolution $\nu\colon\tilde{X}\ra X$ and extension $\tilde{\omega}\in\H^0(\tilde{X},\,\Omega_{\tilde{X}}^2)$ of $\nu^*\omega$ to $\tilde{X}$. We consider the pullbacks $\nu^*\alpha\eqdef\tilde{\alpha}$ of classes $\alpha\in\H^2(X,\,\C)$ as the classes of two-forms. Then the \emph{Beauville-Bogomolov form} on $X$ is the quadratic form $q_{X,\omega}\colon\H^2(X,\,\C)\ra\C$ with
\[
	q_{X,\omega}(\alpha)\defeq \frac{n}{2}\int_{\tilde{X}}(\tilde{\omega}\overline{\tilde{\omega}})^{n-1}\tilde{\alpha}^2+(1-n)\Big(\int_{\tilde{X}}\tilde{\omega}^n\overline{\tilde{\omega}}^{n-1}\tilde{\alpha}\Big)\cdot\Big(\int_{\tilde{X}}\tilde{\omega}^{n-1}\overline{\tilde{\omega}}^n\tilde{\alpha}\Big)
\]
	for all $\alpha\in\H^2(X,\,\C)$. The products and powers denote wedge products of forms.
\end{defi}
The form $q_{X,\omega}$ does not depend on the chosen resolution. Given two resolutions $\nu,\,\nu'$ of $X$, this can be seen by going over to a common resolution factoring through $\nu$ and $\nu'$.

A resolution $\tilde{X}$ does not need to be symplectic again. Note that thus we cannot trivially deduce all properties of $q_{X,\omega}$ from the smooth case by considering $\tilde{X}$.

\begin{defi}[Beauville-Bogomolov form, \protect{cf.\,\cite[Notation~3.2.1.]{Kir15}}]\label{bbform}
\label{kirschnerform}
  Let $X$ be a $2n$-dimensional, complex projective variety. We define for every
  class ${w\in\H^2(X,\,\C)}$ a quadratic form $q_{X,w}\colon\H^2(X,\,\C)\ra\C$ with
  \[
    q_{X,w}(v)\defeq \frac{n}{2}\int_X(w\overline{w})^{n-1}v^2+(1-n)\Big(\int_Xw^n\overline{w}^{n-1}v\Big)\cdot\Big(\int_Xw^{n-1}\overline{w}^nv\Big)
\]
  for all $v\in\H^2(X,\,\C)$. Here the products and powers denote the cup product in the cohomology ring $\H^*(X,\,\C)$. Using our notation from Remark~\ref{notationinclusion}, for an irreducible symplectic
  variety $(X,\omega)$ the quadratic form $q_{X,\underline{\omega}}$ is called
  \emph{Beauville-Bogomolov form} on $X$. 
\end{defi}

In order to obtain a uniquely determined quadratic form $q_X$ on an irreducible symplectic variety $X$, it is convenient to normalize the symplectic class. The following Lemma ensures that the definition will be well defined.

\begin{lem}[Existence and uniqueness of the normalizing]
	\label{norming}
	Let $X$ be a $2n$-dimensional, irreducible symplectic variety $X$. For all $v\in\H^2(X,\,\C)$ we denote ${I(v)\defeq\int_{X}(v\overline{v})^n}$. Every symplectic class ${\underline{\omega}\in\H^2(X,\,\Omega^{[2]}_X)\setminus\{0\}}$ induces a \emph{normalized class} ${w\defeq I(\underline{\omega})^{-\frac{1}{2n}}\cdot \underline{\omega}}$ with $I(w)=1$. The corresponding Beauville-Bogomolov form $q_{X,w}$ does not depend on the choice of $\underline{\omega}$.
\end{lem}

\begin{proof}
	As $X$ is irreducible symplectic, every symplectic class $\alpha\in\H^0(X,\,\Omega^{[2]}_X)\setminus\{0\}$ on $X$ differs only by a constant factor from the class $\omega$, say $\alpha=c\omega$ for $c\in\C^{\times}$. One computes easily that the normalized classes of $c\omega$ and $\omega$ only differ by a factor $\frac{c}{|c|}$ of absolute value one. We see directly from Definition~\ref{kirschnerform} that this does not affect $q_{X,w}$.\qedhere
\end{proof}

\begin{defi}[Normalized Beauville-Bogomolov form]
	\label{normedform}
	Let $X$ be an irreducible symplectic variety. The \emph{normalized Beauville-Bogomolov form} is defined as $q_X\defeq q_{X,w}$ for any class $w\in\H^{2,0}(X)$ with $I(w)=1$.
\end{defi}

\begin{rem}[The symmetric bilinear form]
\label{bilform}
The Beauville-Bogomolov form $q_X$ on an irreducible symplectic variety $(X,\omega)$ is induced by a symmetric bilinear form. Like Matsushita we also denote it by $q_X$, but with two arguments. We have the usual formula $q_X(a,b)=\frac{1}{2}\big(q_X(a+b)-q_X(a)-q_X(b)\big)$ for all $a,b\in\H^2(X,\,\C)$.
\end{rem}

\subsection{Equivalence of the definitions and behaviour under pullbacks}

To relate Namikawa's and Kirschner's definitions of the Beauville-Bogomolov form, we prove in the following lemma that the Definitions~\ref{bbform} and \ref{normedform} behave well under pullbacks along birational morphisms.

\begin{lem}\label{qpullback}
  Let ${\pi\colon Y\ra X}$ be a birational morphism from a normal, complex projective variety $Y$ to a symplectic variety $(X,\omega)$ with an extension $\omega'\in\H^0(Y,\,\Omega^{[2]}_Y)$ of
  $\pi^*\omega$. Then $\underline{\omega}$ is normalized if and only if $\underline{\omega'}$ is normalized and we have 
  \begin{align*}
  	q_{X,\underline{\omega}}=q_{Y,\underline{\omega'}}\circ\pi^*\quad\text{and}\quad q_X=q_Y\circ\pi^*.
  \end{align*}
\end{lem}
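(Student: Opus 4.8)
The plan is to prove the single class identity $\pi^*\underline{\omega}=\underline{\omega'}$ first, since both displayed equalities then follow from it by formal manipulation. The natural setting is a common resolution: I would pick a resolution $\mu\colon\tilde{X}\ra Y$ and set $\nu\defeq\pi\circ\mu\colon\tilde{X}\ra X$, which is again a resolution of $X$ because $\pi$ is birational. On $\tilde{X}$ there is the extension $\tilde{\omega}\in\H^0(\tilde{X},\Omega^2_{\tilde{X}})$ of $\omega$, and I would argue that this same $\tilde{\omega}$ is simultaneously the extension of $\omega'$ to $\tilde{X}$. This is exactly the contravariant functoriality of the reflexive pullback recorded after Theorem~\ref{extension}: by hypothesis $\omega'$ is the extension of $\omega$ along $\pi$, so extending it further along $\mu$ agrees with extending $\omega$ directly along $\nu=\pi\circ\mu$.

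Next I would apply Remark~\ref{notationinclusion} twice. Applied to $(X,\omega)$ with the resolution $\nu$, it states that $\tilde{\omega}$ represents $\nu^*\underline{\omega}$ in de Rham cohomology; applied to $(Y,\omega')$ with the resolution $\mu$, it states that the extension of $\omega'$, which by the previous paragraph is the same form $\tilde{\omega}$, represents $\mu^*\underline{\omega'}$. Identifying de Rham with singular cohomology and using $\nu^*=\mu^*\circ\pi^*$ on $\H^2$ yields $\mu^*(\pi^*\underline{\omega})=\mu^*(\underline{\omega'})$ in $\H^2(\tilde{X},\C)$. To cancel $\mu^*$ I would invoke Theorem~\ref{hdec}: since $Y$ is klt (which is needed already for the class $\underline{\omega'}$ to be defined via Remark~\ref{notationinclusion}), the group $\H^2(Y,\C)$ is pure of weight two and $\mu^*$ sends each Hodge piece $\H^{a,b}(Y)$ injectively into $\H^{a,b}(\tilde{X})$; as the latter form a direct sum, $\mu^*$ is injective on all of $\H^2(Y,\C)$. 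This forces $\pi^*\underline{\omega}=\underline{\omega'}$.

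The first displayed equality $q_{Y,\pi^*\underline{\omega}}=q_{Y,\underline{\omega'}}$ is then immediate, since the Beauville--Bogomolov form of Definition~\ref{bbform} depends only on the distinguished class in $\H^2$ and the two classes just shown to agree. For the second equality I would evaluate $q_{X,\underline{\omega}}$ and $q_{Y,\pi^*\underline{\omega}}\circ\pi^*$ on an arbitrary $v\in\H^2(X,\C)$ and transport each of the three integrals in Definition~\ref{bbform} from $X$ to $Y$. Here I use that $\pi^*$ is a homomorphism of cohomology rings and commutes with complex conjugation, so $\overline{\pi^*\underline{\omega}}=\pi^*\overline{\underline{\omega}}$; this turns every integrand, for instance $(\underline{\omega}\,\overline{\underline{\omega}})^{n-1}v^2$, into its $\pi^*$-pullback, whereupon Lemma~\ref{pullbackintegrals} on birational invariance of integrals, $\int_X\alpha=\int_Y\pi^*\alpha$, equates the two integrals term by term and gives the claim.

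I expect the genuine content to sit entirely in the first equality, and specifically in coupling the reflexive-pullback functoriality with the injectivity of Theorem~\ref{hdec}. The subtle point is that $\underline{\omega}$ and $\underline{\omega'}$ are defined abstractly through $\kappa_{20}$ rather than as honest differential forms, so identifying the de Rham classes of the two extensions on $\tilde{X}$ is precisely the bridge between the definitions on $X$ and on $Y$; one must also be honest that klt-ness of $Y$ is what licenses both the notation $\underline{\omega'}$ and the injectivity of $\mu^*$. By contrast, the passage to the two form identities is routine, resting only on the ring-homomorphism and conjugation properties of $\pi^*$ and on the integration lemma already available in the excerpt.
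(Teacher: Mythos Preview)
Your proposal is correct and follows essentially the same route as the paper: pass to a common resolution $\tilde{X}\to Y\to X$, identify the extensions of $\omega$ and $\omega'$ on $\tilde{X}$ as the same form $\tilde{\omega}$, conclude $\mu^*(\pi^*\underline{\omega})=\mu^*\underline{\omega'}$, and cancel $\mu^*$ via Theorem~\ref{hdec}; then derive the form identities from Lemma~\ref{pullbackintegrals}. The only cosmetic difference is that the paper cancels $\mu^*$ using the bijectivity of $\mu^*$ on $\H^{2,0}(Y)$ specifically, whereas you use injectivity on all of $\H^2(Y,\C)$; both are immediate from Theorem~\ref{hdec}, and your explicit remark that klt-ness of $Y$ is implicitly needed for $\underline{\omega'}$ to even be defined is a fair observation.
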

\begin{proof}
We noted in Lemma~\ref{pullbackintegrals} that the integration of top cohomology
  classes is compa\-tible with pullbacks under birational morphisms. For every
  $v,w\in\H^2(X,\,\C)$ we can apply this to the occurring integrals in Definition~\ref{bbform}
  of the Beauville-Bogomolov form, such that we get the relation $q_{X,w}(v)=q_{Y,\pi^*w}(\pi^*v)$ for all $v,w\in\H^2(X,\,\C)$. Together with $\pi^*\underline{\omega}=\underline{\omega'}\in\H^2(Y,\,\C)$ by Remark~\ref{notationinclusion} this shows the first equality. Similarly we see $I(\underline{\omega})=I(\pi^*\underline{\omega})=I(\underline{\omega'})$, which concludes the proof.
\end{proof}

\begin{cor}[Equivalence of the definitions, \protect{cf.~\cite[Proposition~3.2.15]{Kir15}}]
	On every irreducible symplectic variety $(X,\omega)$ Namikawa's and Kirschner's definitions of the Beauville-Bogomolov form are equivalent in terms of $q_{X,\omega}=q_{X,\underline{\omega}}$.
\end{cor}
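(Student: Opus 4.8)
The only genuine difference between the two definitions is the space on which the integration is performed: Namikawa integrates wedge products of differential forms on a resolution $\tilde{X}$, whereas Kirschner integrates cup products of singular cohomology classes directly on $X$, as in Section~\ref{integration}. The plan is therefore to transport Kirschner's form up to a resolution by means of Lemma~\ref{qpullback}, where the two integration procedures coincide by de Rham's theorem, and then to observe that the resulting expression is literally Namikawa's formula.

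Concretely, I fix a resolution $\nu\colon\tilde{X}\ra X$ together with the extension $\tilde{\omega}\in\H^0(\tilde{X},\,\Omega^2_{\tilde{X}})$ of $\omega$. Applying Lemma~\ref{qpullback} with $Y=\tilde{X}$ and $\pi=\nu$ yields both $\nu^*\underline{\omega}=\underline{\tilde{\omega}}$ and the identity $q_{X,\underline{\omega}}=q_{\tilde{X},\underline{\tilde{\omega}}}\circ\nu^*$. Hence for every class $\alpha\in\H^2(X,\,\C)$, writing $\tilde{\alpha}\defeq\nu^*\alpha$, we get $q_{X,\underline{\omega}}(\alpha)=q_{\tilde{X},\underline{\tilde{\omega}}}(\tilde{\alpha})$, which moves the entire computation onto the smooth variety $\tilde{X}$.

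It then remains to identify $q_{\tilde{X},\underline{\tilde{\omega}}}(\tilde{\alpha})$ with the right-hand side of Namikawa's Definition~\ref{nbbform}. Since $\tilde{X}$ is smooth and compact, de Rham's theorem identifies the cup product in $\H^*(\tilde{X},\,\C)$ with the wedge product of representing forms, and the singular integration $\int_{\tilde{X}}$ of Section~\ref{integration} with the ordinary integration of top forms; moreover, by Remark~\ref{notationinclusion} the class $\underline{\tilde{\omega}}$ is represented by the form $\tilde{\omega}$. Substituting these identifications into Kirschner's formula for $q_{\tilde{X},\underline{\tilde{\omega}}}(\tilde{\alpha})$ reproduces exactly the form-theoretic expression defining $q_{X,\omega}(\alpha)$, so $q_{X,\underline{\omega}}=q_{X,\omega}$. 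The only point that is not purely formal is this last compatibility of the two integration procedures on $\tilde{X}$; but on a smooth compact manifold it is precisely de Rham's theorem, together with the fact that the fundamental class used in Section~\ref{integration} is the one induced by the complex orientation, so no serious obstacle remains and the corollary is essentially immediate from Lemma~\ref{qpullback}.
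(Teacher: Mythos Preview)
Your proof is correct and follows essentially the same route as the paper: both arguments use Lemma~\ref{qpullback} to rewrite Kirschner's form as $q_{\tilde{X},\underline{\tilde{\omega}}}\circ\nu^*$ on a resolution, and then observe that on the smooth variety $\tilde{X}$ this expression is exactly Namikawa's formula via de~Rham's theorem. The paper compresses this into two lines, but your more explicit unpacking of the de~Rham identification is the same content.
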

\begin{proof}
	Let $\nu\colon\tilde{X}\ra X$ be a resolution and $\tilde{\omega}$ the extension of $\nu^*\omega$ to $\tilde{X}$. Using the notation of Kirschner's Definition~\ref{bbform}, we can write Namikawa's Definition~\ref{nbbform} as ${q_{X,\omega}=q_{\tilde{X},\tilde{\underline{\omega}}}\circ\nu^*}$. This equals $q_{X,\underline{\omega}}$ by Lemma~\ref{qpullback}.
\end{proof}

\section{Proofs of the main results}
\label{proofs}
\subsection{Fujiki relations and the index of the Beauville-Bogomolov form}
\label{firstproof}

A natural approach to prove Theorem~\ref{mainfujiki} would be to pass to a resolution of singularities, but this might not be a symplectic variety anymore. Instead, we deduce the Fujiki relation from a terminal model, which we have seen to be a Namikawa symplectic variety. Part~\eqref{ind} of Theorem~\ref{mainfujiki} will follow directly from Proposition~\ref{decomposition} that contains more information about the decomposition of $\H^2(X,\,\C)$. The Hodge theory of $\H^2(X,\,\C)$ and the Fujiki relations allow us to calculate the index by hand.

\begin{proof}[Proof of Theorem~\ref{mainfujiki}, part~\eqref{fr}]
Let $X$ be a $2n$-dimensional, irreducible symplectic variety with normalized Beauville-Bogomolov form $q_X$ on $\H^2(X,\,\C)$ as in Definition~\ref{normedform}. We need to show that there is a constant $c_X\in\R^+$, such that for all $v\in\H^2(X,\,\C)$ we have the Fujiki relation $c_X\cdot q_X(v)^n=\int_Xv^{2n}$.

We write $q_X=q_{X,\omega}$ for a normalized symplectic class $\omega$ with ${\int_X(\underline{\omega\overline{\omega}})^n=1}$. We take a terminal model $\pi\colon Y\ra X$
of $X$ together with an extension
$\omega'\in \H^2(Y,\,\Omega_Y^{[2]})$ of $\pi^*\omega$ from Theorem~\ref{extension}. Then $(Y,\omega')$ is a Namikawa symplectic
variety by Proposition~\ref{termismatsushita}. We have $q_Y=q_{Y,\omega'}$ and $q_X=q_Y\circ\pi^*$ by Lemma~\ref{qpullback}. Now we apply \cite[Theorem~1.2]{Mat01} to $Y$ to get a $c_Y\in\R^+$, such that for all
$u\in\H^2(Y,\,\C)$ we get $c_Y\cdot q_Y(u)^n=\int_Yu^{2n}$. Then by the Lemmas \ref{qpullback} and \ref{pullbackintegrals} we get for every $v\in\H^2(X,\,\C)$:
\[
	c_Y\cdot q_X(v)^n=c_Y\cdot q_Y(\pi^*v)^n=\int_Y(\pi^*v)^{2n}=\int_Y\pi^*(v^{2n})=\int_Xv^{2n}
\]
Hence we have proved the Fujiki relation for the constant $c_X\defeq c_Y$.
\end{proof}

\begin{pro}[Index of the Beauville Bogomolov form $q_X$]
\label{decomposition}
Let $(X,\omega)$ be a $2n$-dimensional, irreducible symplectic variety with normalized Beauville-Bogomolov form $q_X=q_{X,w}$ for $w\in\H^2(X,\,\C)$, and let $a\in\H^2(X,\,\R)$ be an ample class.

Restricting $q_X$ gives a real quadratic form $\H^2(X,\,\R)\ra\R$ and we get the $q_X$-orthogonal decomposition $\H^2(X,\,\R)=V_+\oplus V_-$, where $V_+\defeq\left\langle w+\overline{w},\,iw-i\overline{w},\,a\right\rangle_{\R}$ is a 3-dimensional space on which $q_X$ is positive definite and
\[V_-\defeq a^{\perp}\cap\H^{1,1}(X)\cap\H^2(X,\,\R)=\{d\in\H^{1,1}(X)\cap\H^2(X,\R)\mid q_X(d,a)=0\}\]
is a space on which $q_X$ is negative definite.
\end{pro}

\subsubsection*{Proof that restricting $q_X$ gives a real form.}
An easy computation for arbitrary ${v\in\H^2(X,\C)}$ shows that ${q_{X,w}\big(\overline{v}\big)=\overline{q_{X,w}(v)}}$, using that the integrals are compatible with complex conjugation. Hence restricting gives a real quadratic form $\H^2(X,\,\R)\ra\R$.

\subsubsection*{\protect{Proof of $V_+\perp V_-$ and the positivity of $q_X$ on $V_+$ \emph{(}cf.~\cite[(2.1)]{Mat01}\emph{)}}.}
We can use $\int_X(w\overline{w})^n=1$ and the formula from Remark~\ref{bilform} to calculate the Beauville-Bogomolov bilinear form $q_X$ on the classes $w+\overline{w},\,iw-i\overline{w},\,a$. The Hodge decomposition from Theorem~\ref{hdec} allows us to calculate the occurring integrals via the usual type considerations, which gives the relations $q_X(w+\overline{w})=q_X(iw-i\overline{w})=1$, $q_X(w+\overline{w},\,iw-i\overline{w})=0$, $q_X(w+\overline{w},\,v)=q_X(iw-i\overline{w},\,v)=0$ for all $v\in\H^{1,1}(X)$, and $q_X(a)=\frac{n}{2}\int_X(w\overline{w})^{n-1}a^2$.

Therefore $w+\overline{w},\,iw-i\overline{w}$ and $a$ are orthogonal with respect to $q_X$. The two subspaces $V_+$ and $V_-$ of $\H^2(X,\,\C)$ are likewise orthogonal. The restriction $\restr{q_X}{V_+}$ is diagonal with eigenvalues $1,\,1,\,q_X(a)$. As $w^{n-1}\cup a^3=0$ by type considerations, the Hodge-Riemann bilinear relations, Corollary~\ref{bilrel}, show that $q_X(a)=\frac{n}{2}\psi_{X,a}(w^{n-1},w^{n-1})$ is positive and therefore $q_X$ is positive definite on $V_+$.

\subsubsection*{Proof that $V_+\oplus V_-=\H^2(X,\,\R)$.}
The classes $w+\overline{w},\,iw-i\overline{w}$ and $a$ are obviously real. On the other hand, as $X$ is irreducible symplectic, $\{w,\overline{w}\}$ is a $\C$-basis of $\H^{2,0}(X)\oplus\H^{0,2}(X)$. Hence every real class $v\in\H^2(X,\,\R)$ can be decomposed as ${v=\alpha w+v^{1,1}+\beta\overline{w}}$ with $\overline{\alpha}=\beta\in\C$ and ${v^{1,1}=\overline{v^{1,1}}\in\H^{1,1}(X)\cap\H^2(X,\,\R)}$. We note ${q_X(v^{1,1}-\lambda a,a)=0}$ for ${\lambda\defeq\frac{q_X(v^{1,1},a)}{q_X(a)}\in\R}$. This gives the decomposition
\[
v=\underbrace{Re(\alpha)\cdot(w+\overline w)+Im(\alpha)\cdot(iw-i\overline{w})+\lambda a}_{\in V_+}+\underbrace{v^{1,1}-\lambda a}_{\in V_-}
\]

Hence we get ${V_++ V_-=\H^2(X,\,\R)}$. The directness of this sum decomposition will follow from the negativity of $q_X$ on $V_-$.

\subsubsection*{Proof of the negativity of $q_X$ on $V_-$.}

(\emph{Cf.~\cite[(2.4)]{Mat01}}) Every class $d$ in the space $V_-$ is a real class of type $(1,1)$ with $q_X(d,a)=0$. We need to show $q_X(d)<0$ if $d\neq0$. We use the Fujiki relations on $X$ to calculate for every $t\in\R$:
\[
\int_X(td+a)^{2n}=c_X\cdot q_X(td+a)^n=c_X\cdot \big(t^2q_X(d)+\underbrace{tq_X(d,a)}_{=0}+q_X(a)\big)^n.
\]
Comparing the $t$ and $t^2$-terms on both sides yields $d\cup a^{2n-1}=0$ and
\[
(2n-1)\cdot\int_X(d^2\cup a^{2n-2})=c_X\cdot q_X(d)\cdot q_X(a)^{n-1}.
\]
We have $d=\overline{d}$ because $d$ is real. Therefore the Hodge-Riemann bilinear relations, Corollary~\ref{bilrel}, imply $\int_X(d^2\cup a^{2n-2})=\psi_{X,a}(d,d)<0$ and hence $q(d)<0$.\qed

\subsection{Fibrations of irreducible symplectic varieties}
\label{secondproof}

	Let $(X,\omega)$ be an irreducible symplectic variety of complex
	dimension $2n$, together with a surjective morphism $f\colon X\ra B$ with connected fibers onto a
	normal, complex projective variety $B$ with dimension $0<\dim B<2n$. We subdivide the proof of Theorem~\ref{fibration} into seven steps, which we will prove in the following order:

\begin{itemize}
	\item[\mylabel{dimb}{1a}] $\dim B=n$.
	\item[\mylabel{genlagrangian}{4a}] The general fiber is a Lagrangian subvariety of $X$.
	\item[\mylabel{lagrangian}{4b}] Every fiber component of $f$ is an $n$-dimensional Lagrangian subvariety of $X$ and does not lie completely in $X_{\sing}$.
	\item[\mylabel{smooth}{3a}] The general fiber is smooth.
	\item[\mylabel{abelian}{3b}] The general fiber is an Abelian variety.
	\item[\mylabel{fsing}{2}] $X$ is smooth along the general fiber and $f(X_{\sing})\subset B$ is a proper closed subset.
	\item[\mylabel{qfac}{1b}] $B$ is a $\Q$-factorial klt variety with $\rho(B)=1$.
\end{itemize}

\begin{proof}[Proof of Theorem~\ref{fibration}]
For the proof we choose very ample divisors $A$ on $X$ and $H$ on $B$. We denote their first Chern classes by $a\in\H^2(X,\,\C)$ and $h\in\H^2(B,\,\C)$. For every class ${d=c_1\big(\O_B(D)\big)}$ that comes from a Cartier divisor $D$ on $B$  we get $(f^*D)^{2n}=0$. Moreover, the Fujiki relation $(f^*D)^{2n}=c_X\cdot q_X(f^*d)^n$ gives
\begin{equation}
\label{qfd}
	q_X(f^*d)=0.
\end{equation}

\subsubsection*{Proof of \eqref{dimb}.} (\emph{Cf.~\cite[(3.2)]{Mat01}}) We calculate for every $t\in\R^+$
\[
(A+tf^*H)^{2n}=c_X\cdot q_X(a+tf^*h)^n=c_X\cdot\big(q_X(a)+2tq_X(a,f^*h)\big)^n
\]
by Equation~\eqref{qfd}. We see from the $t$-coefficient that $q_X(a,f^*h)$ is positive because $A$ is ample:
\begin{equation}
\label{qafh}
q_X(a,f^*h)=\frac{A^{2n-1}.f^*H}{c_X\cdot q_X(a)^{n-1}}>0.
\end{equation}
Therefore the $t^k$-coefficient $\binom{2n}{k}\cdot A^{2n-k}.(f^*H)^k$ vanishes for $k>n$, but survives for $k=n$:
\[
\binom{2n}{n}\cdot A^n.(f^*H)^n=2^nc_X\cdot q_X(a,f^*h)^n>0.
\]
This shows $\dim B=n$.
\subsubsection*{Proof of \eqref{genlagrangian}.} (\emph{Cf.~\cite[(3.3)]{Mat01}}) 
We consider a general fiber $F$ over a smooth point $b\in B$ and the embedding ${i\colon F\inj X}$. By Lemma \ref{singfiber} in the appendix the general fiber $F$ fulfills ${F_{\reg}=F\cap X_{\reg}}$ and has at most canonical singularities. Therefore, considering the symplectic form $\omega$ as a holomorphic form on the smooth locus $X_{\reg}$, we can restrict $\omega$ to $F\cap X_{\reg}$ and obtain a reflexive form on $F$, which we denote by ${\restr{\omega}{F}\in\H^2(F,\,\Omega^{[2]}_F)}$. By Lemma~\ref{lag} we need to prove $\restr{\omega}{F}=0$ to show that $F$ is a Lagrangian subvariety of $X$.

We note $\underline{\omega|_F}=i^*w$ for $w\defeq\underline{\omega}$ and that $i^*a$ is an ample class on $F$. The idea is now to show
\begin{equation}
\label{intzero}
\int_Fi^*(w\overline{w}a^{n-2})=0.
\end{equation}
This integral equals $\psi_{F,i^*a}(i^*w,i^*w)$ and by the Hodge-Riemann bilinear relations, Corollary~\ref{bilrel}, it vanishes if and only if $\restr{\omega}{F}=0$, otherwise it is positive.

As $X,F,B$ are compact, complex varieties, they have canonical fundamental classes $[X]\in\H_{4n}(X,\,\Z)$, $[F]\in\H_{2n}(F,\,\Z)$, $[B]\in\H_{2n}(B,\,\Z)$. By definition, we have $H^n=[B]\cap h^n$, so the class ${[B]^{\vee}\defeq\frac{1}{H^n}h^n\in\H^{2n}(B,\,\Z)}$ is dual to $[B]$.

If $X,F,B$ are smooth, Poincar\'{e} duality gives the equality
\begin{equation}
\label{fiberdual}
[X]\cap f^*\big([B]^{\vee}\big)=i_*[F]
\end{equation}
in $\H^{2n}(X,\,\Z)$. Going over to a commutative diagram of resolutions and doing calculations with the projection formula shows that equation~\eqref{fiberdual} holds also in the singular case, \cite[Lemma~63]{Sch17b}. We use this to calculate the integral~\eqref{intzero}.
\begin{align*}
\int_Fi^*(w\overline{w}a^{n-2})&=[F]\cap i^*(w\overline{w}a^{n-2})\\
&=i_*[F]\cap w\overline{w}a^{n-2}&\text{Projection formula}\\
&=\frac{1}{H^n}\big([X]\cap(f^*h)^n\big)\cap w\overline{w}a^{n-2}&\text{By equation~\eqref{fiberdual}}\\
&=\frac{1}{H^n}[X]\cap\big(w\overline{w}a^{n-2}\cup(f^*h)^n\big)&\textbf{\cite[Theorem~VI~5.2.(3)]{Bre97b}}\\
&=\frac{1}{H^n}\int_Xw\overline{w}a^{n-2}(f^*h)^n
\end{align*}
We calculate the latter integral as a term of the following Fujiki relation for ${s,t\in\R^+}$.
\begin{align*}
	\int_X(w+\overline{w}+sa+tf^*h)^{2n}&=c_X\cdot q_X(w+\overline{w}+sa+tf^*h)^n\\
	&=c_X\cdot\big(q_X(w+\overline{w})+s^2q_X(a)+2stq_X(a,f^*h)\big)^n	
\end{align*}
In the last step we used the vanishing of $q_X(f^*h)$ by equation~\eqref{qfd} and the vanishing of $q_X(w+\overline{w},a)$ and $q_X(w+\overline{w},f^*h)$ that follows directly from the definition of the Beauville-Bogomolov form via type considerations. Comparing the $s^{n-2}t^n$-terms gives $\int_Xw\overline{w}a^{n-2}(f^*h)^n=0$ and therefore $\restr{\omega}{F}=0$.

\subsubsection*{Proof of~\eqref{lagrangian}.}
By part~\eqref{genlagrangian} $f$ is a Lagrangian fibration. Hence part~\eqref{lagrangian} follows from Theorem~\ref{equidimensional}.

\subsubsection*{Proof of \eqref{smooth}.}
Let $\fI$ denote the ideal sheaf corresponding to $F$ on $X$ and $\fN_{F/X}\defeq\fH om(\fI/\fI^2,\O_F)$ the normal sheaf of $F$ in $X$. As $F$ is a general fiber, we have $\fN_{F/X}|_{F_{\reg}}\cong\fN_{F_{\reg}/X_{\reg}}$ by Lemma~\ref{singfiber}. As $F$ is Lagrangian, contracting with the symplectic form gives the well-known isomorphism $\fN_{F_{\reg}/X_{\reg}}\cong\Omega^1_{F_{\reg}}$.

The differential $df$ induces a morphism $\fT_{X_{\reg}}|_{F_{\reg}}\to T_bB\times F_{\reg}$, where we consider $T_bB\times F_{\reg}$ as a free sheaf of rank $n$ on $F_{\reg}$.
The kernel of $df$ is exactly $\fT_{F_{\reg}}$ and $df$ is surjective because of ${\dim B+\dim F=\dim X}$.
The exact sequence of the tangent complex, \cite[page~182]{Har77}, gives us $\fN_{F_{\reg}/X_{\reg}}\cong\sfrac{\fT_{X_{\reg}}|_{F_{\reg}}}{\fT_{F_{\reg}}}\cong T_bB\times F_{\reg}$, so the normal bundle $\fN_{F_{\reg}/X_{\reg}}$ is also free of rank $n$.

Being the dual of a coherent sheaf, $\fN_{F/X}$ is reflexive, \cite[Corollary~1.2]{MR597077}. As $F$ is normal, the isomorphism $\fN_{F/X}|_{F_{\reg}}\stackrel{\sim}{\lra}\Omega^1_{F_{\reg}}$ extends uniquely to an isomorphism $\fN_{F/X}\stackrel{\sim}{\lra}\Omega^{[1]}_F$ of free sheaves on $F$, \cite[Proposition~1.6]{MR597077}.

Therefore the tangent sheaf $\fT_F=\big(\Omega^{[1]}_F\big)^{\ast}$ is globally free and $F$ is smooth by the Lipman-Zariski conjecture, which is already proven for the singularities of the minimal model program, \cite[Theorem~6.1]{GKKP11}, \cite[Corollary~1.3]{GK14}, \cite[Theorem~1.1]{Dru14}.

\subsubsection*{Proof of \eqref{abelian}.} As we showed in part~\eqref{smooth} that the general fiber $F$ of $f$ is a compact Kähler manifold with globally free tangent sheaf, we may directly conclude by \cite[Corollary~2]{Wan54} that $F$ is an Abelian variety. A more algebraic argument is to use the freeness of $\Omega^1_F$, so $\h^0(F,\,\Omega^1_F)=\dim F$. The Kodaira dimension $\kappa(F)$ is zero because $K_F=0$. Then by \cite[Corollary~2]{Kaw81} the Albanese morphism $\alpha\colon F\ra\Alb(F)$ is a birational morphism to the Abelian variety $\Alb(F)$. In fact $\alpha$ is an isomorphism because $K_F$ is numerically effective.

\subsubsection*{Proof of \eqref{fsing}.}
As $X_{\sing}$ is closed and $f$ is proper, $f(X_{\sing})$ is closed too. By part~\eqref{smooth} the general fiber $F$ of $f$ is smooth. It is completely contained in $X_{\reg}$ by Lemma~\ref{smoothfiber} in the appendix. Thus $f(X_{\sing})$ is a proper closed subset of $B$.

\subsubsection*{Proof of \eqref{qfac}.} We may assume here that $X$ is Namikawa symplectic, as for the following argument we can go over to a terminal model of $X$. Let $D$ be a Weil divisor on $B$ and $X_0\defeq f^{-1}(B_{\reg})$. As $D$ is Cartier on $B_{\reg}$, we can consider the pullback $f|^*_{X_0}(\restr{D}{B_{\reg}})$. This induces a divisor $\tilde{D}$ on $X$ by taking the closures of its components. Then $m\tilde{D}$ is Cartier for an $m\in\Np$ because $X$ is $\Q$-factorial, and we get a class $d\defeq\frac{1}{m}c_1\big(\O_X(m\tilde{D})\big)$ in $\H^2(X,\C)$.

We have $\codim(X\setminus X_0)=\codim B_{\sing}\geq2$ because $f$ is equidimensional by part~\eqref{lagrangian} and $B$ is normal. Hence no component of $\tilde{D}$ or $D$ lies outside of $X_0$ or $B_{\reg}$, respectively. This implies $f_*\tilde{D}=D$ as Weil divisors and if $D$ is Cartier also ${\tilde{D}=f^*D}$ and $d=f^*c_1\big(\O_B(D)\big)$. 

Like in \cite[Step~4]{Mat99}, we calculate now for $t\in\R$ the Fujiki relation
\begin{align*}
(\tilde{D}&+tf^*H+sA)^{2n}\\
&=c_X\cdot\big(q_X(d)+s^2q_X(a)+2tq_X(d,f^*h)+2sq_X(d,a)+2stq_X(f^*h,a)\big)^n,
\end{align*}
where we used $q_X(f^*h)=0$, and compare coefficients. Considering the constant term, of $t^n$ and of $t^{n-1}s^{n-1}$ gives constants $c_1,\ldots,c_4\in\Q^+$ with
\begin{align*}
\tilde{D}^{2n}&=c_1\cdot q_X(d)^n\\
\tilde{D}^n.(f^*H)^n&=c_2\cdot q_X(d,f^*h)^n\\
\tilde{D}^2.(f^*H)^{n-1}.A^{n-1}&=c_3\cdot q_X(d)q_X(f^*h,a)^{n-1}\\
&+c_4\cdot q_X(d,f^*h)q_X(d,a)q_X(f^*h,a)^{n-2}.
\end{align*}
We show that all three intersection products vanish. For this we may substitute the factors $H$ by general hyperplane sections $H_i$ of $B$. By Bertini's Theorem $H_1,\ldots,H_{n-1}$ intersect in a smooth curve $C\subset B_{\reg}$ that meets $D$ transversally in finitely many points in $B_{\reg}$, \cite{Aki51}. Thus the intersection products $\tilde{D}^n.(f^*H)^n$ and $\tilde{D}^2.(f^*H)^{n-1}.A^{n-1}$ can be calculated over $B_{\reg}$. As $X$ is Cohen Macaulay and $f$ equidimensional, the restriction $\restr{f}{X_0}$ is flat, \cite[Theorem~18.16]{E95}. By \cite[Proposition~2.5d]{Fulton98} we can calculate
\begin{align*}
\tilde{D}^n.(f^*H)^n&=f^*(D|_{B_{\reg}}^n.H|_{B_{\reg}}^n)=0,\\
\tilde{D}^2.(f^*H)^{n-1}.A^{n-1}&=f^*(D|_{B_{\reg}}^2.H|_{B_{\reg}}^{n-1}).A|_{X_0}^{n-1}=0
\end{align*}
because $\dim B=n$. Hence $q_X(d,f^*h)=0$, and due to $q_X(f^*h,a)>0$ also ${q_X(d)=0}$, which gives $\tilde{D}^{2n}=0$.

The vanishing of $q_X(d,f^*h)$, $q_X(d)$, $q_X(f^*h)$ give the Fujiki relation $(\tilde{D}-\lambda f^*H)^{2n}=c_X\cdot q_X(d-\lambda f^*h)^n=0$ for every $\lambda\in\R$. For $\lambda\defeq\frac{\tilde{D}.A^{2n-1}}{f^*H.A^{2n-1}}\in\Q$ we get also $(\tilde{D}-\lambda f^*H).A^{2n-1}=0$, thus the $t^{2n-1}$ terms of the Fujiki relation
\[
(\tilde{D}-\lambda f^*H+tA)^{2n}=c_X\cdot(2tq_X(d-\lambda f^*h,a)+t^2q_X(a))^n
\]
vanish. So $q_X(d-\lambda f^*h,a)=0$ and therefore $d-\lambda f^*h\in a^{\perp}\cap\H^{1,1}(X)\cap\H^2(X,\,\R)$. The form $q_X$ is by Proposition~\ref{decomposition} negative definite on this space, so $d=\lambda f^*h$ in $\H^2(X,\,\Q)$ and $\tilde{D}\equiv\lambda f^*H$.

Now, if like in Matsushita's setting $D$ is $\Q$-Cartier, we get $D\equiv\lambda H$ by the projection formula, so $\rho(B)=1$. In the general case we show that $D$ is \emph{numerically $\Q$-Cartier} in the sense of \cite{HMP15}. This means that we have to construct for any resolution $\mu\colon\tilde{B}\ra B$ a $\mu$-trivial $\Q$-Cartier divisor $D'$ on $\tilde{B}$ with $\mu_*D'=D$.

For every resolution $\mu\colon\tilde{B}\ra B$ the fiber product $X\times_{B}\tilde{B}$ consists of the exceptional components of $\mu$ and a component $Y$ that is isomorphic to the graph of $f$ and lies birational over $X$. Hence we can construct a commutative diagram of resolutions
\begin{equation}
			\begin{gathered}
			\label{commonres}
			\begin{tikzpicture}[scale=1.5]
			\node (A) at (0,1) {$\tilde{X}$};
			\node (B) at (1,1) {$X$};
			\node (C) at (0,0) {$\tilde{B}$};
			\node (D) at (1,0) {$B$};
			
			\path[->,font=\scriptsize]
			(A) edge node[above]{$\nu$} (B)
			(C) edge node[above]{$\mu$} (D)
			(A) edge node[left]{$\tilde{f}$} (C)                        
			(B) edge node[right]{$f$} (D);
			\end{tikzpicture}
			\end{gathered}
\end{equation}
by taking $\tilde{X}$ to be a resolution of $Y$, together with the morphisms $\nu,\tilde{f}$ induced by the projections from the fiber product onto $X,B$. We consider $D'\defeq \tilde{f}_*\nu^*\tilde{D}$. As $\nu^*\tilde{D}\equiv\lambda\nu^*f^*H=\lambda\tilde{f}^*\mu^*H$, we have $D'\equiv\lambda\mu^*H$, which is $\mu$-trivial with ${\mu_*D'=\mu_*\tilde{f}_*\nu^*\tilde{D}=f_*\nu_*\nu^*\tilde{D}=D}$, so $D$ is numerically $\Q$-Cartier.

Fujino's result \cite[Theorem~1.2]{Fuj99} shows that $B$ is a klt variety, so it has rational singularities. Hence by \cite[Theorem~5.11]{HMP15} $D$ is $\Q$-Cartier, so $B$ is $\Q$-factorial.
\end{proof}

\subsection{Fibrations of primitive symplectic varieties}

We give an example showing that part~\ref{fano} of Theorem~\ref{fibration2} can fail for an irreducible symplectic variety. The idea is due to Matsushita and was worked out by Sawon \cite[p.~7f]{Mat01}, \cite[Lemma~15]{Saw14}.

\begin{ex}
\label{ex}
Consider the elliptic curve $E=\C/\Gamma$ with the lattice $\Gamma=\Z+\Z\cdot\zeta_6$, where $\zeta_6$ is a primitive sixth root of unity, and the $6$-torus $T\defeq E^6$. The maps
\begin{align*}
\C^6\ra&\;\C^6\\
(z_1,\ldots,z_6)\mapsto&\;(\zeta_6z_1,\zeta_6^5z_2,-z_3,-z_4,\zeta_6^2z_5,\zeta_6^4z_6)\\
(z_1,\ldots,z_6)\mapsto&\;(z_5,z_6,z_1,z_2,z_3,z_4)
\end{align*}
generate a group of automorphisms of $\C^6$ preserving $\Gamma^6\subset\C^6$, which induces a subgroup $G\subset\Aut T$. Then $X\defeq T/G$ is an irreducible symplectic variety with ${\omega\defeq dz_1\wedge dz_2+dz_3\wedge dz_4+dz_5\wedge dz_6}$, \cite[Proposition~2.4]{Bea00}. However, it is not primitive symplectic, as it carries holomorphic 3-forms. The group $G$ also acts on the subtorus $T'$ with coordinates $(z_1,z_3,z_5)$ and the quotient $B\defeq T'/G$ carries the holomorphic form $dz_1\wedge dz_3\wedge dz_5$, hence $K_B=0$, and $B$ has canonical singularities due to the Reid--Tai criterion~\cite[Theorem~3.21]{Kol13}.
\end{ex}

\begin{proof}[Proof of Theorem~\ref{fibration2}]
Let $(X,\omega)$ be a primitive symplectic variety with a morphism $f\colon X\ra B$ like in Theorem~\ref{fibration2}. We have to show the following two parts.

\begin{enumerate}
	\item\label{basefano} The base variety $B$ is Fano.
	\item\label{basepn} If $B$ is smooth, then $B\cong\P^n$.
\end{enumerate}

\subsubsection*{Proof of~\eqref{basefano}.}
As $B$ is $\Q$-factorial with $\rho(B)=1$ by Theorem~\ref{fibration}, the canonical divisor $K_B$ is $\Q$-Cartier with $K_B\equiv tH$ for a $t\in\Q$. By the Iitaka conjecture, which due to Kawamata is known to hold in this special case, see Theorem~\ref{iitaka} in the appendix, we have $\kappa(B)\leq0$, so $t\leq0$. Therefore we only need to exclude the case $t=0$. Up to here this is the same idea as in \cite[step 5]{Mat99}.

We assume that $K_B$ is numerically trivial. Applying \cite[Theorem~8.2]{Kawamata85} to a terminal model of $B$ shows that there is a ${d\in\Np}$, such that $\O_B(dK_B)\cong\O_B$ is trivial. As the canonical sheaf is a reflexive sheaf that is locally free on $X_{\reg}$, we can go over to a ramified cyclic covering like in \cite[2.52--2.53]{KM98}. This is a finite morphism $p\colon\hat{B}\ra B$ with ${p^*\O_B(K_B)=\O_{\hat{B}}}$ that is \'{e}tale of degree $d$ over $X_{\reg}$ and possibly branched over $X_{\sing}$. Hence $p$ is quasi-\'{e}tale because $X$ is normal. The variety $\hat{B}$ is by \cite[Proposition~5.20]{KM98} also a klt variety. We consider the fiber product $X\times_B\hat{B}$ and take the normalization $\hat{X}$ of any component that lies over $X_{\reg}$. The projections from the fiber product give a lift $\hat{f}\colon \hat{X}\ra\hat{B}$ of the morphism $f$ with $K_{\hat{B}}=0$ and a quasi-\'{e}tale morphism $\pi\colon\hat{X}\ra X$. There is a non-zero reflexive form $\alpha\in\H^0(\hat{B},\,\Omega^{[n]}_{\hat{B}})$ and, as $\hat{B}$ is a klt variety, we can pull it back by Theorem~\ref{extension} to a non-zero reflexive form ${\hat{f}^*\alpha\in\H^0(\hat{X},\,\Omega^{[n]}_{\hat{X}})}$. As $\hat{f}^*\alpha$ is not zero, $n$ has to be even and $\hat{f}^*\alpha$ a multiple of $\pi^*(\omega)^{n/2}$ because $X$ is primitive symplectic. This is a contradiction because $\hat{f}^*\alpha^2=0$, but $\pi^*\omega^n$ has no zeros.

\subsubsection*{Proof of~\eqref{basepn}.}
As $X$ is primitive symplectic, every terminal model $\pi\colon Y\ra X$ of $X$ is also primitive symplectic by Proposition~\ref{termisprimitive}.  Then ${\h^p(Y,\,\O_Y)=\h^0(Y,\,\Omega_X^{[p]})}$ for all $p$ by Hodge symmetry, \cite[Proposition~6.9]{GKP16}, so $Y$ is \emph{cohomologically irreducible symplectic} in Matsushita's terminology, \cite[Definition~1.6]{Mat15}. Applying Matsushita's result \cite[Theorem~1.10]{Mat15} to $f\circ\pi\colon Y\ra B$ shows $B\cong\P^n$ if $B$ is smooth.
\end{proof}

\section*{Appendix}

We explain here some probably well-known results used in the main proofs above. Lemma~\ref{smoothfiber} can be seen as a special case of \cite[Theorem~I.6.5]{Kol96}. Theorems~\ref{torsionfree} and~\ref{iitaka} are easy but useful consequences of results of Koll\'{a}r and Kawamata.

\begin{lem}[Smoothness at general fibers]
	\label{smoothfiber}
	Let $f\colon X\ra Y$ be a surjective morphism of complex varieties with a general fiber $F$. If $F$ is smooth at a point $x\in F$, then $X$ is smooth at $x$.
\end{lem}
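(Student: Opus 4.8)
The plan is to reduce the statement to a criterion in local algebra: over a general point the morphism $f$ is flat, and regularity of the local ring $\O_{X,x}$ can be read off from the base ring $\O_{Y,y}$ and the fibre ring through a flat local homomorphism. Throughout, ``general'' means that $y\defeq f(x)$ lies in a suitable dense open subset of $Y$, which is permissible since $F$ is a general fibre and any finite intersection of dense opens is again dense open.

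First I would shrink the target. As $Y$ is a variety, $Y_{reg}$ is dense and open, so I may assume $y\in Y_{reg}$ and replace $Y$ by $Y_{reg}$ and $X$ by $f^{-1}(Y_{reg})$; this changes neither $x$ nor the germ of $X$ at $x$. Passing to the irreducible component of $Y$ through $y$, I may moreover assume $Y$ smooth and irreducible, so that $\O_{Y,y}$ is a regular local ring and $f$ is dominant.

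Next I would establish flatness and identify the fibre. By generic flatness there is a dense open $U\subseteq Y$ over which $f$ is flat, and taking $y\in U$ makes the induced homomorphism $\O_{Y,y}\ra\O_{X,x}$ a flat local homomorphism of Noetherian local rings. In characteristic zero the scheme-theoretic generic fibre of the dominant morphism $f$ is reduced, being obtained from the reduced scheme $X$ by localization, and the locus of $y$ with reduced fibre is constructible and contains the generic point, hence contains a dense open. For such general $y$ the scheme-theoretic fibre $f^{-1}(y)$ therefore coincides near $x$ with the variety $F$, so that $\O_{X,x}/\mathfrak{m}_y\O_{X,x}\cong\O_{F,x}$ is regular because $F$ is smooth at $x$.

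Finally I would apply the standard criterion that regularity ascends along a flat local homomorphism (Matsumura, \emph{Commutative Ring Theory}, Theorem~23.7): for the flat local homomorphism $\O_{Y,y}\ra\O_{X,x}$, the ring $\O_{X,x}$ is regular provided both $\O_{Y,y}$ and the fibre ring $\O_{X,x}/\mathfrak{m}_y\O_{X,x}$ are regular. Both are regular by the previous steps, so $\O_{X,x}$ is regular and $X$ is smooth at $x$. The main obstacle is not this algebraic criterion, which is immediate, but the bookkeeping that connects it to the hypothesis: the criterion involves the scheme-theoretic fibre $\O_{X,x}/\mathfrak{m}_y\O_{X,x}$, whereas one is only given smoothness of the reduced general fibre $F$. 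Establishing flatness at $x$ and verifying that for general $y$ the scheme-theoretic fibre is reduced---so that its local ring at $x$ agrees with that of $F$---is the only genuinely nontrivial input.
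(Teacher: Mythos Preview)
Your argument is correct. The paper's proof is essentially the same algebra, but packaged more elementarily: instead of invoking generic flatness and Matsumura's ascent-of-regularity theorem, it writes down explicit generators. With $y$ smooth and $\dim F=r$, it lifts a regular system of parameters $(\phi_1,\ldots,\phi_r)$ of $\fm_{F,x}$ to $\overline{\phi_i}\in\fm_{X,x}$, pulls back a regular system $(\psi_1,\ldots,\psi_n)$ of $\fm_{Y,y}$, and checks from the short exact sequence $0\to f^*\fm_{Y,y}\to\fm_{X,x}\to\fm_{F,x}\to 0$ that these $n+r$ elements generate $\fm_{X,x}$; regularity follows because $\dim\O_{X,x}=n+r$.

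Your route via flatness buys you a one-line appeal to a standard theorem and, as you note, forces you to be explicit about why the scheme-theoretic fibre coincides with the reduced fibre $F$ at $x$; the paper's exact sequence tacitly uses the same identification but does not spell it out. Conversely, the paper's approach avoids the machinery of generic flatness and the Matsumura reference, at the cost of a slightly more hands-on calculation. Both arguments rest on the same underlying fact that $\fm_{X,x}$ is generated by $\fm_{Y,y}$ together with $\fm_{F,x}$.
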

\begin{proof}
	Let $X,\,Y$ be $n+r$ and $n$-dimensional, respectively. A general fiber $F$ lies over a smooth point $y\in Y$ and has the dimension $r$ by the fiber dimension theorem.
	Consider a smooth point $x\in F$ with maximal ideals ${\fm_{X,x}\subset\O_{X,x}}$ and ${\fm_{F,x}\subset\O_{F,x}}$ and the maximal ideal $\fm_{Y,y}\subset\O_{Y,y}$ of $y$. As $y\in Y$ and $x\in F$ are regular, we can write ${\fm_{F,x}=(\overline{\phi_1},\ldots,\overline{\phi_r})}$ and ${\fm_{Y,y}=(\psi_1,\ldots,\psi_n)}$. The ideal sheaf sequence for $F\subset X$ allows us to extend the $\overline{\phi_i}$ to functions ${\phi_i\in\fm_{X,x}\subset\O_{X,x}}$. Then it can easily be checked that we get a short exact sequence
	\[
	0\ra f^*\mathfrak{m}_{Y,y}\ra\fm_{X,x}\ra\fm_{F,x}\ra0
	\]
	that shows $\fm_{X,x}=(f^*\psi_1,\ldots,f^ *\psi_n,\phi_1,\ldots,\phi_r)$. Hence $x\in X$ is regular and also smooth because we work over a perfect field. 
\end{proof}

\begin{lem}[Singularities of general fibers]
	\label{singfiber}
	Let $f\colon X\ra Y$ be a surjective morphism of complex varieties with connected fibers, where $X$ is normal.  Then the general fiber $F$ is irreducible with ${F_{\sing}=F\cap X_{\sing}}$. When $X$ has canonical (resp.\,terminal) singularities, the general fiber also has canonical (resp.\,terminal) singularities.
\end{lem}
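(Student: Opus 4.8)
The plan is to reduce each of the three assertions to a property of the generic fiber combined with generic smoothness, which is available since we work in characteristic zero. For irreducibility I would pass to the generic fiber $X_{\eta}\defeq X\times_Y\operatorname{Spec}K(Y)$, which is irreducible because $X$ is irreducible and $f$ dominant; the general closed fiber is then irreducible as soon as $X_{\eta}$ is geometrically irreducible. To obtain the latter, let $L$ be the algebraic closure of $K(Y)$ in $K(X)$ and factor $f$ through the normalization $Y'$ of $Y$ in $L$. Since $Y'\ra Y$ is generically \'{e}tale of degree $[L:K(Y)]$, the general fiber of $f$ would break into at least $[L:K(Y)]$ connected components unless this degree equals $1$; the hypothesis that $f$ has connected fibers therefore forces $K(Y)$ to be algebraically closed in $K(X)$, which in characteristic zero is exactly geometric irreducibility of $X_{\eta}$. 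Generic smoothness of $X_{\eta}$ gives reducedness as well, so the general fiber $F$ is integral.

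For the equality $F_{sing}=F\cap X_{sing}$, one inclusion is simply the contrapositive of Lemma~\ref{smoothfiber}: a point of $F$ at which $X$ is singular cannot be a smooth point of $F$, so $F\cap X_{sing}\subset F_{sing}$. For the reverse inclusion I would apply generic smoothness to the restriction $f|_{X_{reg}}\colon X_{reg}\ra Y$. Over a general $y\in Y$ the fiber $F\cap X_{reg}$ is then smooth, so every point of $F$ lying in $X_{reg}$ is a regular point of $F$, which gives $F_{sing}\subset F\cap X_{sing}$.

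For the statement on singularity types I would first verify that $F$ is normal. Canonical singularities are rational, hence $X$ is Cohen--Macaulay; writing $m\defeq\dim Y$ and choosing local parameters $t_1,\dots,t_m$ at a smooth point $y$, the fiber $F$ is cut out near $y$ by the regular sequence $f^*t_1,\dots,f^*t_m$ for general $y$, so $F$ is Cohen--Macaulay and satisfies $S_2$. Condition $R_1$ follows from the previous paragraph together with $\codim_F F_{sing}\ge2$, which one reads off from $\codim_X X_{sing}\ge2$ and the fiber-dimension theorem applied to $f|_{X_{sing}}$. Hence $F$ is normal, and since $F$ is a transverse complete intersection of the $f^*t_i$ its conormal bundle in $X$ is trivial, so adjunction yields $K_F=K_X|_F$; in particular $K_F$ is $\Q$-Cartier.

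The core of the argument is then a discrepancy comparison on a resolution. Choose a log resolution $\pi\colon\tilde X\ra X$ with $K_{\tilde X}=\pi^*K_X+\sum a_iE_i$, where $a_i\ge0$ in the canonical case and $a_i>0$ in the terminal case. Applying generic smoothness to $f\circ\pi$ and simultaneously to every intersection stratum of the $E_i$, for general $y$ the restriction $\pi|_{\tilde F}\colon\tilde F\ra F$ is a resolution whose exceptional divisors are precisely the $E_i|_{\tilde F}$, and the same triviality of conormal bundles gives $K_{\tilde F}=K_{\tilde X}|_{\tilde F}$. Restricting the discrepancy identity then yields
\[
K_{\tilde F}=(\pi|_{\tilde F})^*K_F+\sum a_i\,(E_i|_{\tilde F}),
\]
so every discrepancy of $F$ is bounded below by the corresponding $a_i$, and $F$ inherits canonical, respectively terminal, singularities. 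I expect the main obstacle to lie exactly here: ensuring that for general $y$ the restricted map $\pi|_{\tilde F}$ is genuinely a resolution that captures all exceptional divisors, so that no discrepancy over $F$ is missed. This is precisely why generic smoothness must be applied not only to $f\circ\pi$ but to all strata of the exceptional divisor; once $\tilde F$ is smooth and the $E_i|_{\tilde F}$ exhaust the exceptional set, the adjunction identities $K_F=K_X|_F$ and $K_{\tilde F}=K_{\tilde X}|_{\tilde F}$ reduce the whole comparison to formal bookkeeping.
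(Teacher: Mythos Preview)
Your argument is correct and covers all three assertions, but it is organized differently from the paper's proof. The paper reduces by induction to the case where $F$ is a general hypersurface section of $X$: over a smooth point of $Y$ the fiber is a local complete intersection, so one may cut down one hypersurface at a time. Then Bertini's theorem does all the work simultaneously---irreducibility, normality, smoothness outside $X_{sing}$, and smoothness of the strict transform in a resolution $\tilde X$---and adjunction gives $\operatorname{discr}(F)\ge\operatorname{discr}(X)$ in one line. Your route avoids this induction and instead argues directly on the fibration: geometric irreducibility of the generic fiber via the Stein-factorization observation that connected fibers force $K(Y)$ to be algebraically closed in $K(X)$; generic smoothness on $X_{reg}$ and on all strata of a log resolution; and Serre's criterion for normality via Cohen--Macaulayness. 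The discrepancy comparison is then the same adjunction argument as in the paper, just written out explicitly rather than quoted.

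What each buys: the paper's Bertini-and-induct approach is shorter and lets one cite standard hypersurface results (e.g.\ normality of a general hyperplane section) off the shelf, but it obscures exactly which ``general'' conditions are being imposed. Your generic-smoothness approach is longer but makes the genericity hypotheses explicit---you can see precisely which dense opens of $Y$ are being intersected---and it does not need the inductive reduction to codimension one. Your closing caveat about making sure $\pi|_{\tilde F}$ really is a resolution whose exceptional divisors are exactly the $E_i|_{\tilde F}$ is well placed; this is handled in the paper by Bertini on $\tilde X$, and in your setup by applying generic smoothness to each stratum, which suffices.
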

\begin{proof}
A general fiber $F$ lies over a smooth point $y\in Y$.
Let $L_y$ denote the linear system of hyperplanes passing through $y$.
Then the fiber $F$ is the base locus of the linear system $f^*L_y$.
The general member of $f^*L_y$ is normal with singularities contained in $X_{\sing}$ \cite[Theorem~1.7.1.1]{BS95}.
It follows by induction over the codimension of $F$ that $F$ is normal with $F_{\sing}\subset F\cap X_{\sing}$. 
On the other hand, a smooth point of $F$ is by Lemma~\ref{smoothfiber} also a smooth point of $X$.
This gives the equality $F_{\sing}=F\cap X_{\sing}$. As $F$ is normal and connected, it is also irreducible.

When $\nu\colon\tilde{X}\ra X$ is a resolution, the same argument shows that the strict transform $\tilde{F}$ of $F$ is smooth.
The adjunction formula allows to compare the minimal discrepancies as ${\mathrm{discr}(F)\geq \mathrm{discr}(X)}$, which shows that $F$ also has canonical (resp. terminal) singularities if $X$ does.

\qedhere
\end{proof}

\begin{thm}[Koll\'{a}r's torsion-freeness]
\label{torsionfree}
Let $f\colon X\ra B$ be a surjective morphism with connected fibers between complex projective varieties, where $X$ has rational singularities. Then all higher direct image sheaves $R^{i}f_*\omega_X$ for $i\geq0$ are torsion-free sheaves on $B$.
\end{thm}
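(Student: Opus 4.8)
The plan is to reduce to Koll\'ar's classical torsion-freeness theorem for smooth varieties by passing to a resolution. First I would take a resolution of singularities $\nu\colon\tilde{X}\ra X$ and set $g\defeq f\circ\nu\colon\tilde{X}\ra B$, which is again surjective and projective. Since $X$ has rational singularities it is Cohen-Macaulay, so its dualizing sheaf coincides with the reflexive canonical sheaf $\omega_X=\Omega^{[n]}_X$; moreover Kempf's criterion for rational singularities gives that the natural trace map is an isomorphism $\nu_*\omega_{\tilde{X}}\stackrel{\sim}{\lra}\omega_X$.

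Next I would invoke the Grauert-Riemenschneider vanishing theorem, which for any resolution of a complex variety yields $R^q\nu_*\omega_{\tilde{X}}=0$ for all $q>0$. Feeding this into the Grothendieck spectral sequence for the composition $g=f\circ\nu$,
\[
E_2^{p,q}=R^pf_*\big(R^q\nu_*\omega_{\tilde{X}}\big)\Rightarrow R^{p+q}g_*\omega_{\tilde{X}},
\]
kills every row with $q>0$, so the sequence degenerates and provides canonical isomorphisms $R^if_*\omega_X\cong R^if_*\nu_*\omega_{\tilde{X}}\cong R^ig_*\omega_{\tilde{X}}$ for all $i\geq0$.

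Finally, since $\tilde{X}$ is a smooth projective variety and $g\colon\tilde{X}\ra B$ is a surjective morphism of complex projective varieties, Koll\'ar's torsion-freeness theorem applies directly to $g$ and shows that each $R^ig_*\omega_{\tilde{X}}$ is torsion-free on $B$. Transporting this through the isomorphism above makes each $R^if_*\omega_X$ torsion-free, which is the assertion. I note that the hypothesis that $f$ has connected fibers plays no role in this argument and is recorded only to match the way the theorem is used in the main proofs.

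I expect the sole delicate point to be the identification $\nu_*\omega_{\tilde{X}}\cong\omega_X$: this is exactly where the rational singularity assumption is indispensable, since without it the trace map is merely an inclusion with a possibly nonzero cokernel and the final isomorphism would break down. Everything else is the standard Leray-type degeneration combined with the classical smooth case.
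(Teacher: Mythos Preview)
Your proposal is correct and follows essentially the same route as the paper: pass to a resolution, use Grauert--Riemenschneider vanishing to collapse the Grothendieck spectral sequence for $f\circ\nu$, identify $\nu_*\omega_{\tilde X}\cong\omega_X$ via the rational singularity hypothesis, and then invoke Koll\'ar's classical torsion-freeness for the smooth variety $\tilde X$. Your additional remarks on Kempf's criterion and on the connected-fibers hypothesis being superfluous are accurate side comments but not part of the paper's argument.
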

\begin{proof}
Let $\nu\colon \tilde{X}\ra X$ be a resolution of singularities. We consider the Grothendieck spectral sequence for the functors $\nu_*$, $f_*$ and the canonical sheaf $\omega_{\tilde{X}}=\Omega^n_{\tilde{X}}$.
\[
E^{pq}_2=R^pf_*(R^q\nu_*\omega_{\tilde{X}})\Ra R^{p+q}(f\circ\nu)_*\omega_{\tilde{X}}
\]
By Grauert-Riemenschneider vanishing all higher direct images $R^q\nu_*\omega_{\tilde{X}}$ vanish for $q>0$, \cite[Theorem~4.3.9]{LazI}. Hence we are only left with the row $E^{p0}_2$ and the spectral sequence degenerates already on $E_2$. As $X$ has rational singularities, we have ${\nu_*\omega_{\tilde{X}}=\omega_X}$ by \cite[Lemma~5.12]{KM98}. This gives us
\[
R^pf_*\omega_X\cong R^pf_*(\nu_*\omega_{\tilde{X}})\cong R^p(f\circ\nu)_*\omega_{\tilde{X}}.
\]
The latter sheaf is torsion-free by Koll\'{a}r's result \cite[Theorem~2.1]{Kol86}.
\end{proof}

\begin{thm}[Reduction of the Iitaka conjecture to the minimal model program]
\label{iitaka}
Let $f\colon X\ra B$ be a surjective morphism with connected fibers between normal, complex projective varieties. If the general fiber $F$ has a \emph{good minimal model}, in other words $F$ is birational to a variety with semiample canonical divisor, then $\kappa(X)\geq\kappa(F)+\kappa(B)$.
\end{thm}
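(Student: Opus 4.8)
The statement is, up to the passage from smooth to normal varieties, precisely Kawamata's subadditivity theorem for algebraic fiber spaces whose general fiber admits a good minimal model. The plan is therefore to reduce to the smooth case by passing to a commutative square of resolutions, and then to invoke Kawamata's result \cite[Theorem~1.1]{Kawamata85} as a black box. The only work is to check that all three Kodaira dimensions, as well as the good-minimal-model hypothesis, are preserved under this reduction.

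First I would choose a resolution $\mu\colon\tilde{B}\ra B$ and, after resolving the induced rational map, a resolution $\nu\colon\tilde{X}\ra X$ carrying a morphism $\tilde{f}\colon\tilde{X}\ra\tilde{B}$, so as to obtain a commutative square of resolutions exactly as in diagram~\ref{commonres}, with $\tilde{X}$ and $\tilde{B}$ smooth. Over the dense open locus $U\subset B$ where $\mu$ is an isomorphism, commutativity identifies a general fiber $\tilde{F}$ of $\tilde{f}$ with the $\nu$-preimage of a general fiber $F$ of $f$. Since $X$ is normal, $\nu$ has connected fibers by Zariski's main theorem, so this preimage of the connected $F$ is again connected; by generic smoothness $\tilde{F}$ is moreover smooth, hence irreducible, and the induced map $\tilde{F}\ra F$ is birational. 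Thus $\tilde{f}$ is a surjective morphism of smooth projective varieties with connected general fiber $\tilde{F}$, and $\tilde{F}$ is a resolution of $F$.

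Next I would record that the reduction changes nothing: the Kodaira dimension of a normal variety is by definition that of any resolution, so $\kappa(\tilde{X})=\kappa(X)$ and $\kappa(\tilde{B})=\kappa(B)$, while $\kappa(\tilde{F})=\kappa(F)$ because $\tilde{F}$ and $F$ are birational. As having a good minimal model is a birational notion, the hypothesis on $F$ transfers to $\tilde{F}$. Kawamata's theorem \cite[Theorem~1.1]{Kawamata85} applied to $\tilde{f}\colon\tilde{X}\ra\tilde{B}$ then gives $\kappa(\tilde{X})\geq\kappa(\tilde{F})+\kappa(\tilde{B})$, which translates back into $\kappa(X)\geq\kappa(F)+\kappa(B)$. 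The substantive analytic input---Viehweg's weak positivity of direct image sheaves and the addition theorem for the relative Kodaira dimension underlying \cite{Kawamata85}---is used only through the citation, so the genuine content here is modest. The single point requiring care, and hence the main (if minor) obstacle, is verifying that $\tilde{f}$ still has connected general fibers and that $\tilde{F}$ is truly birational to $F$, since this is exactly what lets the good-minimal-model hypothesis pass to the smooth model.
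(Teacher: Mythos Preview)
Your proposal is correct and follows essentially the same route as the paper: lift $f$ to a morphism $\tilde f$ between resolutions via the commutative square of diagram~\ref{commonres}, check that the general fiber of $\tilde f$ is smooth, connected, and birational to $F$, and then invoke Kawamata's subadditivity theorem in the smooth case. You give more careful justification for the connectedness and birationality of the general fiber (via Zariski's main theorem) than the paper, which simply asserts these facts and appeals to Lemma~\ref{singfiber} for smoothness; the only cosmetic discrepancy is that the paper cites \cite[Corollary~1.2]{Kawamata85} rather than Theorem~1.1.
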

\begin{proof}
 We construct a commutative diagram like diagram~\ref{commonres} that lifts $f$ to a morphism $\tilde{f}$ between resolutions $\tilde{X},\tilde{B}$ of $X,B$. The fibers of $\tilde{f}$ are connected. The general fiber of $\tilde{f}$ is smooth by Lemma \ref{singfiber} and lies over the general fiber of $f$. As the Kodaira dimension is a birational invariant, Theorem~\ref{iitaka} follows now from the smooth case that was proved by Kawamata, \cite[Corollary 1.2]{Kawamata85}.
\end{proof}


\ifx\undefined\bysame
\newcommand{\bysame}{\leavevmode\hbox to3em{\hrulefill}\,}
\fi

\end{document}